\def\ind{\mathrel{\hbox{\rlap{%
\hbox to 7.5pt{\hrulefill}}\raise6.6pt\hbox{\eka\char'167}}}}
\newtheorem{theorem}{Theorem}
\newtheorem{proposition}{Proposition}
\newtheorem{corollary}{Corollary}
\newtheorem{lemma}{Lemma}
\newtheorem{example}{Example}
\newcommand{\Ex}[1]{\mathbb{E}\! \left(#1\right)}
\newcommand{\Cov}[1]{\textup{Cov}\left[#1\right]}
\newcommand{\ip}[1]{\left<#1\right>}
\newcommand{\dom}{\mathcal{D}}          
\newcommand{\norm}[1]{\left| \! \left| #1 \right| \! \right|}
\newcommand{\bbm}[1]{\left[\begin{matrix} #1 \end{matrix}\right]}
\newcommand{\sbm}[1]{\left[\begin{smallmatrix} #1
             \end{smallmatrix}\right]}
\newcommand{\sumk}{\sum_{k=1}^{\infty}}
\newcommand{\tr}{\textup{tr}}
\newcommand{\sX}{\mathcal{X}}
\newcommand{\sY}{\mathcal{Y}}
\newcommand{\sH}{\mathcal{H}}
\newcommand{\sU}{\mathcal{U}}
\newcommand{\Ch}{C_{\square}}
\begin{document}



\title[]{Convergence of discrete time Kalman filter estimate to
  continuous time estimate*\footnote{*T\lowercase{his is the accepted author's version of the manuscript accepted for publication in} I\lowercase{nternational} J\lowercase{ournal of} c\lowercase{ontrol.}}}

\author[Atte Aalto]{Atte Aalto$^{\lowercase{\textup{a,b}}}$ \\ \\ $^{\lowercase{\textup{a}}}$D\lowercase{epartment of} M\lowercase{athematics and} S\lowercase{ystems} A\lowercase{nalysis}, A\lowercase{alto} U\lowercase{niversity}, E\lowercase{spoo}, F\lowercase{inland} \\ $^{\lowercase{\textup{b}}}$I\lowercase{nria}, U\lowercase{niversit\'e} P\lowercase{aris}--S\lowercase{aclay}, P\lowercase{alaiseau}, F\lowercase{rance}; M$\Xi$DISIM \lowercase{team} }
  

\maketitle

\begin{abstract}
  This article is concerned with the convergence of the
  state estimate obtained from the discrete time Kalman filter to the
  continuous time estimate as the temporal discretization is
  refined. The convergence follows from Martingale convergence theorem
  as demonstrated below but, surprisingly, no results exist on the
  rate of convergence. We derive convergence rate estimates for the discrete time Kalman filter estimate for finite and infinite dimensional systems. The proofs
  are based on applying the discrete time Kalman filter on a dense
  numerable subset of a certain time interval
  $[0,T]$.
  
  \medskip
\noindent Keywords: Kalman filter, infinite dimensional systems,
  temporal discretization, sampled data
\end{abstract}

\section{Introduction}

It is well known that Kalman filter (or Kalman--Bucy filter) gives the
optimal solution to the state estimation problem for discrete (or continuous) time
linear systems with Gaussian initial state, and Gaussian input and
output noise processes. These filters have proven to be
robust and they have been widely used in practical applications since
their introduction in the 1960s. The implementation of the discrete
time filter is straightforward since it is readily formulated in an
algorithmic manner. Thus, it may often be tempting to use the discrete
time filter on the temporally discretized continuous time system. The
purpose of this article is to study the convergence of a state
estimate from discrete time Kalman filter to the continuous time state
estimate as the temporal discretization is refined. In particular, we
show convergence speed estimates for the quadratic error between the
discrete time and continuous time estimate first for finite
dimensional systems without input noise, then finite dimensional systems with input noise, and finally, for infinite dimensional systems with a bounded observation operator.

The class of systems studied here is described
by mappings $(A,B,C)$ where $A:\sX \to \sX$, $B:\sU \to \sX$, and $C:\sX \to \sY$,
and the
corresponding dynamics equations
\begin{equation} \label{eq:system}
\begin{cases}
dz(t)=Az(t) \, dt + Bdu(t), \qquad t \in \mathbb{R}^+, \\
dy(t)=Cz(t) \, dt + dw(t), \\
z(0)=x.
\end{cases}
\end{equation}
Here $\sX$ is called the \emph{state space}, $\sU = \mathbb{R}^q$ is the \emph{input space}, and $\sY=\mathbb{R}^r$ is the
\emph{output space}. The mapping $A$ is the generator of a contractive
$C_0$-semigroup $e^{At}$ on $\sX$ with domain $\dom(A)$, $B:\mathbb{R}^q \to \sX$ is the \emph{input operator},  and $C:\sX \to
\mathbb{R}^r$ is called the \emph{observation operator}.
The observation operator can be bounded or not but it always maps to a
finite dimensional space in this article. The process $y$ is called the
\emph{output process}. The \emph{input} and \emph{output noise processes} $u$ and $w$ are assumed
to be $q$- and $r$-dimensional Brownian motions with incremental covariance
matrices $Q>0$ and $R>0$, respectively, and the \emph{initial state} $x$ is assumed to be an
$\sX$-valued Gaussian random variable, $x \sim N(m,P_0)$. The noise processes $u$ and $w$ and the initial state $x$ are assumed to be mutually independent. Note that the system \eqref{eq:system} is written as a stochastic differential equation. For background of stochastic equations and the formulation of the Kalman--Bucy filter in this framework, we refer to \cite{Oks} (in particular, Section~6.3 therein) and \cite{Davis}.

The discrete and continuous time state estimates are defined by
\begin{equation} \label{eq:estimates} \hat{z}_{T,n}:=\Ex{z(T) \,
    \Big|\left\{y\left(\tfrac{iT}{n}\right)\right\}_{i=1}^n} \ \quad
  \textrm{and} \ \quad \hat{z}(T):=\Ex{z(T) \, \big|\big\{y(s),s \le T
    \big\}},
\end{equation}
respectively. That is, we are estimating the final state of the
system \eqref{eq:system}. 
These estimates
are given by the discrete and continuous time Kalman filter,
respectively.
 The purpose of this article is to study the
convergence $\hat{z}_{T,n} \to \hat{z}(T)$ as $n \to \infty$.

In Section~\ref{sec:stochastics}, we cover the necessary background
concerning stochastics and the Kalman filter. 
The proofs of the main results are based on using the discrete time Kalman filter on a sequence that forms a dense subset of the interval $[0,T]$.
In particular, in Section~\ref{sub:stochastics}, it is shown that  this procedure in fact converges to $\hat{z}(T)$ strongly in $\sX$ almost
surely. Gaussian random variables and the Kalman filter are introduced
in Section~\ref{sub:Kalman}. Section~\ref{sec:noiseless} contains the main result in the simplest case, namely namely an estimate of the convergence speed of $\Ex{\norm{\hat{z}_{T,n}
    - \hat{z}(T)}_{\sX}^2}$ when $n$ is increased for finite dimensional system without input noise. The proofs of the other results follow the same outline, and so this simplest case is shown in full detail in order to convey the ideas as clearly as possible.
In the beginning of the section it is shown how to take into account an intermediate measurement in Kalman filtering --- an important tool in the proofs. The result for systems with input noise is shown in Section~\ref{sec:noise} and for infinite dimensional systems with bounded observation operator the result is generalized in Section~\ref{sec:infdim}.


The Kalman filter performance has been widely studied in
literature. Even though it was originally derived for state estimation
for finite dimensional linear systems with Gaussian input and output
noise processes it has proven to be very robust and thus applicable to
a variety of other scenarios. Variants for non-linear systems have
been developed, such as the extended Kalman filter
and the unscented Kalman filter, see the book \cite{simon_book}. Kalman filter sensitivity to modelling errors has been studied by for example \cite{Sun} and
\cite[Chapter~7]{Gelb}. See also the recent work \cite{Wave_error} for
a study on the effect of modelling errors in an infinite dimensional
example case, namely the one dimensional wave equation. The effect of
state space discretization to Kalman filtering has been studied in,
\emph{e.g.}, \cite{Bensoussan}, \cite{Galerkin_filter}, and in \cite{Aalto_state}.

However, the error that stems from using the discrete time filter on
the temporally discretized continuous time system has not received
much attention.  Two recent articles, \cite{Axelsson} and
\cite{Axelsson2}, have studied different numerical methods for
approximating the matrix exponential $e^{A\Delta t}$ and the effect of
this approximation on the solution of the corresponding Lyapunov
equations and Kalman filtering. A convergence result of the discrete
time Kalman filter estimate in finite dimensional setting is shown by
 \cite{Salgado} without convergence rate
estimate. They use similar techniques that can also be used to
(formally) obtain the Kalman-Bucy filter as a limit of the discrete
time Kalman filter, as is done for example in
\cite[Section~8.2]{simon_book} and \cite[Section~4.3]{Gelb}.

\subsubsection*{Notation and standing assumptions}
\begin{itemize}
\item[$\circ$] The space of bounded operators from a Hilbert space
  $\sH_1$ to another Hilbert space $\sH_2$ is denoted by
  $\mathcal{L}(\sH_1,\sH_2)$, and
  $\mathcal{L}(\sH_1)=\mathcal{L}(\sH_1,\sH_1)$.
\item[$\circ$] We assume that the state space $\sX$ is a separable
  Hilbert space. Denote by $\{e_k\}_{k=1}^{p/\infty} \subset \sX$ an
  orthonormal basis for the $p/\infty$-dimensional state space.
\item[$\circ$] $A$ is the generator of a $C_0$-semigroup
  on $\sX$. The semigroup is denoted by $e^{At}$ even though $A$ is
  not bounded in general. We assume $\norm{e^{At}}_{\mathcal{L}(\sX)} \le \mu$ for $t \in [0,T]$.
\item[$\circ$] The space $\dom(A)$ is equipped with the graph norm
  $\norm{x}_{\dom(A)}^2=\norm{x}_{\sX}^2+\norm{Ax}_{\sX}^2$ which makes
  $\dom(A)$ a Hilbert space since $A$ is closed.
\item[$\circ$] We assume that the observation operator is bounded, $C \in \mathcal{L}(\sX,\sY)$, and that the input operator is smooth, that is, $B \in \mathcal{L}(\sU,\dom(A))$. The input and output spaces are always  finite dimensional, $\sU=\mathbb{R}^q$ and $\sY=\mathbb{R}^r$.
\item[$\circ$] $\Omega$ is a probability space and $L^2(\Omega;\sX)$
  is the space of $\sX$-valued random variables $\xi$ satisfying
  $\Ex{\norm{\xi}_{\sX}^2}<\infty$.
\item[$\circ$] The sigma algebra generated by a random variable $\xi$ is
  denoted by $\sigma\{ \xi \}$.
\item[$\circ$] To improve readability, we use index $n$ only when
  referring to the discretization level in the state estimate $\hat
  z_{T,n}$ defined in \eqref{eq:estimates}, index $k$ only to denote
  different dimensions of the state space, and index $j$ only when
  referring to the martingale $\tilde z_j$ defined below in
  Section~\ref{sub:stochastics}.

\end{itemize}

\section{Background and preliminary results} \label{sec:stochastics}

As mentioned above, the proofs of this article are based on applying
the discrete time Kalman filter on a dense, numerable subset on the
interval $[0,T]$ --- starting from the discrete time state estimate
$\hat z_{T,n}$ --- and computing an upper bound for the change in the
estimate. In section \ref{sub:stochastics}, we establish that the
limit thus obtained is indeed $\hat z(T)$. Gaussian random variables
and the Kalman filter are discussed in Section~\ref{sub:Kalman}. 


\subsection{Stochastics} \label{sub:stochastics}

In the cases where the state space $\sX$ is infinite dimensional it is
always assumed either that $x \in \dom(A)$ almost surely or that $C
\in \mathcal{L}(\sX,\sY)$. This guarantees that the stochastic process
$y$ given by \eqref{eq:system} has almost surely continuous sample
paths. Let $\left\{t_i \right\}_{i=1}^{\infty}$ be a dense subset of
the interval $[0,T]$ and denote ${\textup T}_j:=\left\{t_i
\right\}_{i=1}^j$.  Now let $\xi$ be an integrable $\sX$-valued random
variable and $y$ a stochastic process with almost surely continuous
sample paths. Then $[\xi]_k:=\ip{\xi,e_k}_{\sX}$ is an integrable
$\mathbb{R}$-valued random variable for each $k$. Define the
martingales $[\tilde \xi_j]_k:=\Ex{\ip{\xi,e_k}_{\sX}|\mathcal{F}_j}$
where $\mathcal{F}_j$ is the sigma algebra generated by $\{y(t),t \in
{\textup T}_j\}$, that is, $\mathcal{F}_j=\sigma\left\{ y(t),t \in
  {\textup T}_j \right\}$. It holds that $\Ex{|[\tilde \xi_j]_k|} \le
\Ex{| \!  \ip{\xi,e_k}_{\sX} \! |}$ for all $j$ and thus by Doob's
Martingale convergence theorem (see \cite[Appendix~C]{Oks}, in
particular, Theorem~C.6 and Corollary~C.9), $[\tilde \xi_j]_k \to
[\tilde \xi_{\infty}]_k$ almost surely. As $y$ has continuous sample
paths, it holds that $[\tilde \xi_{\infty}]_k=
\Ex{\ip{\xi,e_k}_{\sX}|\{y(s),s \le T \}}$ almost surely. Using this
componentwise implies that $\tilde \xi_j
:=\Ex{\xi|\mathcal{F}_j}=\sum_{k=1}^{\infty} [\tilde \xi_j]_ke_k$
converges strongly (in $\sX$) almost surely to $\tilde
\xi_{\infty}=\sum_{k=1}^{\infty} [\tilde \xi_{\infty}]_ke_k$.

In general, the martingale convergence theorem is true for Banach spaces that have the Radon--Nikodym property. All reflexive Banach spaces (and therefore also Hilbert spaces) have the Radon--Nikodym property. The above deduction follows essentially the proof of this fact in the special case of $\sX$ being a Hilbert space, see \cite[Corollary~2.11]{Pisier}.


In the proofs, we will need the following telescope identity for martingales.
\begin{lemma} \label{lem:tele}
Let $\xi_j$ be a square integrable $\sX$-valued martingale. Then for 
$L,N \in \mathbb{N}$ with
  $L \ge N$:
\begin{equation} \nonumber
\Ex{\norm{\xi_L-\xi_N}_{\sX}^2}=\sum_{j=N}^{L-1} \Ex{\norm{\xi_{j+1}-\xi_j}_{\sX}^2}.
\end{equation}
\end{lemma}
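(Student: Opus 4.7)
The plan is to write $\xi_L - \xi_N$ as a telescoping sum of increments and exploit the orthogonality of martingale increments in $L^2(\Omega;\sX)$. Concretely, I would set $\Delta_j := \xi_{j+1} - \xi_j$, so that
\begin{equation*}
\xi_L - \xi_N = \sum_{j=N}^{L-1} \Delta_j,
\end{equation*}
and then expand
\begin{equation*}
\norm{\xi_L-\xi_N}_{\sX}^2 = \sum_{j=N}^{L-1}\sum_{k=N}^{L-1} \ip{\Delta_j,\Delta_k}_{\sX}.
\end{equation*}
Taking expectations, the result will follow once the off-diagonal terms are shown to vanish.

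The main step is therefore to prove that $\Ex{\ip{\Delta_j,\Delta_k}_{\sX}} = 0$ for $j < k$. Let $\mathcal{F}_j$ denote the filtration with respect to which $\xi_j$ is a martingale. Since $\Delta_j$ is $\mathcal{F}_k$-measurable when $k \ge j+1$, I would condition on $\mathcal{F}_k$ and use linearity of the inner product with respect to conditional expectation (which is legitimate because the inner product $\ip{\cdot,e_l}_{\sX}$ with any basis vector commutes with conditional expectation, and square integrability justifies the exchange of sum and conditional expectation). This gives
\begin{equation*}
\Ex{\ip{\Delta_j,\Delta_k}_{\sX}\,|\,\mathcal{F}_k} = \ip{\Delta_j, \Ex{\Delta_k\,|\,\mathcal{F}_k}}_{\sX} = \ip{\Delta_j, \Ex{\xi_{k+1}\,|\,\mathcal{F}_k} - \xi_k}_{\sX} = 0
\end{equation*}
by the martingale property. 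Taking outer expectation and using the tower rule yields $\Ex{\ip{\Delta_j,\Delta_k}_{\sX}}=0$.

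With cross terms eliminated, only the diagonal $j=k$ contributions survive, giving exactly $\sum_{j=N}^{L-1}\Ex{\norm{\Delta_j}_{\sX}^2}$, which is the claimed identity. The only real subtlety is justifying that the inner product may be pulled through conditional expectation in the infinite dimensional Hilbert setting; this can be reduced to the scalar case by expanding one argument in the orthonormal basis $\{e_k\}$, using the scalar tower property coordinatewise, and recombining — exactly the same componentwise trick used in Section~\ref{sub:stochastics} for the martingale convergence argument. Square integrability of $\xi_j$ guarantees the required interchange of sum and expectation.
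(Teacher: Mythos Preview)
Your proposal is correct and takes essentially the same approach as the paper: both prove the identity by telescoping and showing that martingale increments are orthogonal in $L^2(\Omega;\sX)$. The only cosmetic difference is that the paper first establishes $\Ex{\ip{\xi_k,\xi_j}_{\sX}}=\Ex{\norm{\xi_j}_{\sX}^2}$ for $k\ge j$ and then expands $\Ex{\ip{\xi_{k+1}-\xi_k,\xi_{j+1}-\xi_j}_{\sX}}$ into four terms, whereas you condition directly on $\mathcal{F}_k$ and invoke the tower property; both are standard routes to the same orthogonality.
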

\begin{proof}
The result follows directly from the fact that martingale increments are orthogonal. Let us show this. Let $k \ge j$ and denote $\mathcal{F}_i=\sigma \{ \xi_1,...,\xi_i \}$. Then (recalling that $\xi_k-\Ex{\xi_k|\mathcal{F}_j} \perp \xi_i$ for $i \le j$ and the martingale property $\Ex{\xi_k|\mathcal{F}_j}=\xi_j$),
\[
\Ex{\ip{\xi_k,\xi_j}_{\sX}}=\Ex{\ip{\Ex{\xi_k|\mathcal{F}_j}+(\xi_k-\Ex{\xi_k|\mathcal{F}_j}),\xi_j}_{\sX}}=\Ex{\ip{\xi_j,\xi_j}_{\sX}}.
\]
Using this, we have (let now $k>j$)
\begin{align*}
\Ex{\ip{\xi_{k+1}-\xi_k,\xi_{j+1}-\xi_j}}=&\Ex{\ip{\xi_{k+1},\xi_{j+1}}}-\Ex{\ip{\xi_{k+1},\xi_j}} \\ & -\Ex{\ip{\xi_k,\xi_{j+1}}}+\Ex{\ip{\xi_k,\xi_j}}=0.
\end{align*}
\end{proof}

Below we sometimes need the assumption that $x \in \dom(A)$ almost
surely. With Gaussian random variables this means that $x$ is actually
a $\dom(A)$-valued random variable.
\begin{proposition} \label{prop:X_1} Let $\xi$ be an $\sX$-valued
  Gaussian random variable s.t. $\xi \in \sX_1$ almost surely where
  $\sX_1 \subset \sX$ is another Hilbert space with continuous and
  dense embedding. Then $\xi$ is an $\sX_1$-valued Gaussian random
  variable.
\end{proposition}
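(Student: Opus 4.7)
The plan is to verify the two defining properties of an $\sX_1$-valued Gaussian random variable: (a) measurability from $(\Omega,\mathcal{F})$ to $(\sX_1,\mathcal{B}(\sX_1))$, and (b) Gaussianity of the real-valued random variable $\ell(\xi)$ for every $\ell \in \sX_1^*$. Throughout, I would exploit the fact that the dual map $i^*:\sX^* \to \sX_1^*$ of the continuous dense embedding $i:\sX_1 \hookrightarrow \sX$ has dense range.

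I would first establish this density. Since $i(\sX_1)$ is dense in $\sX$, the adjoint $i^*$ is injective. If its range were not dense in $\sX_1^*$, then Hahn--Banach combined with the reflexivity $\sX_1^{**} \cong \sX_1$ would produce a nonzero $\eta \in \sX_1$ with $(i^*\ell)(\eta)=\ell(i\eta)=0$ for every $\ell \in \sX^*$, forcing $i\eta=0$ and hence $\eta=0$, a contradiction.

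For (b), given $\ell \in \sX_1^*$ I would pick $\ell_n \in \sX^*$ with $i^*\ell_n \to \ell$ in $\sX_1^*$. On the probability-one event $\{\xi \in \sX_1\}$ we have $|\ell_n(\xi) - \ell(\xi)| \le \norm{i^*\ell_n - \ell}_{\sX_1^*}\norm{\xi}_{\sX_1} \to 0$, so $\ell_n(\xi) \to \ell(\xi)$ almost surely. Each $\ell_n(\xi)$ is a real Gaussian because $\xi$ is an $\sX$-valued Gaussian, and an almost-sure (a fortiori, in-distribution) limit of real Gaussians is itself Gaussian: a standard characteristic-function argument shows that finiteness of the limit forces both the means and the variances to converge. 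Hence $\ell(\xi)$ is Gaussian.

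For (a), using that $\sX_1$ is separable (as is the case in every application of this proposition in the paper, e.g.\ $\sX_1=\dom(A)$), the Borel $\sigma$-algebra $\mathcal{B}(\sX_1)$ coincides with the cylindrical $\sigma$-algebra generated by $\sX_1^*$. It therefore suffices to check that $\omega \mapsto \ell(\xi(\omega))$ is $\mathcal{F}$-measurable for every $\ell \in \sX_1^*$, which is immediate since it is the almost-sure limit of the measurable functions $\ell_n(\xi)$. The main delicate point I foresee is the density of $i^*(\sX^*)$ in $\sX_1^*$; once this is in hand, both Gaussianity and measurability follow from the same approximation scheme.
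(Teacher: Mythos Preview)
Your proof is correct and follows essentially the same route as the paper: approximate each $\ell \in \sX_1^*$ by functionals coming from $\sX^*$ via the dense inclusion $\sX \cong \sX^* \hookrightarrow \sX_1^*$, then use that an almost-sure limit of real Gaussians is Gaussian. You are in fact more thorough than the paper, explicitly supplying the Hahn--Banach argument for the density of $i^*(\sX^*)$ in $\sX_1^*$ and addressing $\sX_1$-measurability, both of which the paper leaves implicit.
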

\begin{proof}
  Pick $h \in \sX_1$. We intend to show that $\ip{\xi,h}_{\sX_1}$ is a
  real-valued Gaussian random variable. For $h \in \sX_1$ there exists
  $h' \in \sX_1'$, the dual space of $\sX_1$,
  s.t. $\ip{\xi,h}_{\sX_1}=\ip{\xi,h'}_{(\sX_1,\sX_1')}$ and further,
  there exists a sequence $\{h_i\}_{i=1}^{\infty} \subset \sX$ such
  that $\ip{\xi,h'}_{(\sX_1,\sX_1')}=\lim_{i \to
    \infty}\ip{\xi,h_i}_{\sX}$. Now $\ip{\xi,h_i}_{\sX}$ is a pointwise
  converging sequence of Gaussian random variables and so the limit is
  also Gaussian.
\end{proof}
\noindent Fernique's theorem \cite[Theorem~2.6]{DaPrato} can be
applied to note that
if $\xi$ is an $\sX_1$-valued Gaussian random variable then $\xi \in
L^p(\Omega;\sX_1)$ for any $p > 0$. In particular,
$\Ex{\norm{\xi}_{\sX_1}^2}<\infty$ and if $A \in \mathcal{L}(\sX_1,\sX)$ then
$A\xi$ is an $\sX$-valued Gaussian random variable.

\subsection{Kalman filter} \label{sub:Kalman}

The discrete time Kalman filter was originally presented in
\cite{Kalman}. The continuous time filter is known as the Kalman--Bucy
filter, and it was presented in \cite{Kalman_Bucy}. We also refer to
the book \cite{Gelb} for a thorough introduction to both discrete and
continuous time Kalman filters as well as the usual techniques needed
in different scenarios.  Of course, the original presentations are in
finite dimensional setting. The infinite dimensional generalization of
the discrete time Kalman filter is rather straightforward, and it can
be found for example in \cite{Horowitz_phd}. The infinite dimensional
Kalman--Bucy filter is considered in \cite{Bensoussan} and \cite[Chapter~6]{CP}. 
However, we do not need to be concerned with the continuous time equations.
Our approach is based on
using the discrete time Kalman filter on a numerable set $\{ t_j \}_{j=1}^{\infty}$
that is dense on an interval $[0,T]$, and bounding the $L^2(\Omega;\sX)$-norm of the estimate increment when adding a new time point $t_j$. In this section we thus review the discrete time
Kalman filter equations.

The Kalman filter is based on the fact that with linear systems with
Gaussian initial state and input and output noise processes, the state
vector remains a Gaussian stochastic process. Also, the conditional
expectation of the state with respect to the measurements is a
Gaussian process. The statistical properties of the Gaussian
$\sX$-valued random variable $\xi$ are completely characterized by the
mean $m=\Ex{\xi} \in \sX$ and the covariance operator $P=\Cov{\xi,\xi} \in
\mathcal{L}(\sX)$, defined for $h \in \sX$ by $\Cov{\xi,\xi}h:=\Ex{(\xi-m)
  \ip{\xi-m,h}_{\sX}}$. Thus it is meaningful to write $\xi \sim N(m,P)$
meaning that $\xi$ is a Gaussian random variable with mean $m$ and
covariance $P$. The covariance operator is symmetric and nonnegative
and, in addition, it is a trace class operator with
$\tr(P)=\Ex{\norm{\xi-m}_{\sX}^2}$, see \cite[Lemma~2.14 \&
Proposition~2.15]{DaPrato}. In fact, by Fernique's theorem, Gaussian
random variables are $p$-integrable for every $p > 0$.

For square integrable random variables, the conditional expectation
with respect to a random variable $\xi$ is a projection onto the
subspace generated by $\xi$. With jointly Gaussian random variables $\xi_1
\in \sX$ and finite dimensional $\xi_2$, this projection has an easy
representation. That is, if $\xi=\bbm{\xi_1 \\ \xi_2} \sim N\left( \bbm{m_1
    \\ m_2}, \bbm{P_{11} & P_{12} \\ P_{12}^* & P_{22}} \right)$ then
\begin{equation} \nonumber
\Ex{\xi_1|\xi_2}=m_1+P_{12}P_{22}^+(\xi_2-m_2)
\end{equation}
where $P_{22}^+$ denotes the (Moore-Penrose) pseudoinverse of
$P_{22}$. The error covariance is
\begin{equation} \nonumber
\Cov{\xi_1-\Ex{\xi_1|\xi_2},\xi_1-\Ex{\xi_1|\xi_2}}=P_{11}-P_{12}P_{22}^+P_{12}^*.
\end{equation}
Now applying the above equations to a Gaussian random variable
$[\xi_1,\xi_2,\xi_3]$ where $\xi_2$ and $\xi_3$ are finite dimensional, and the
2-by-2 blockwise matrix inversion formula to $\Cov{\sbm{\xi_2 \\
    \xi_3},\sbm{\xi_2 \\ \xi_3}}$ leads directly to
\begin{align} \label{eq:KF_m}
\Ex{\xi_1|[\xi_2,\xi_3]}=&\Ex{\xi_1|\xi_2}+\Cov{\xi_1-\Ex{\xi_1|\xi_2},\xi_3-\Ex{\xi_3|\xi_2}}\times \\
\nonumber &\times \Cov{\xi_3-\Ex{\xi_3|\xi_2},\xi_3-\Ex{\xi_3|\xi_2}}^+(\xi_3-\Ex{\xi_3|\xi_2})
\end{align}
and
\begin{align} \nonumber
  &\Cov{\xi_1-\Ex{\xi_1|[\xi_2,\xi_3]},\xi_1-\Ex{\xi_1|[\xi_2,\xi_3]}} \\
   \label{eq:KF_P} &  =\Cov{\xi_1-\Ex{\xi_1|\xi_2},\xi_1-\Ex{\xi_1|\xi_2}}  
-\Cov{\xi_1-\Ex{\xi_1|\xi_2},\xi_3-\Ex{\xi_3|\xi_2}} \\
  \nonumber & \hspace{17mm}
  \times\Cov{\xi_3-\Ex{\xi_3|\xi_2},\xi_3-\Ex{\xi_3|\xi_2}}^+\Cov{\xi_3-\Ex{\xi_3|\xi_2},\xi_1-\Ex{\xi_1|\xi_2}}.  
\end{align}
These equations make it possible to update the state estimate (here
$\Ex{\xi_1|\xi_2}$) recursively when a new measurement (here $\xi_3$) is
obtained from the system.

From \eqref{eq:KF_m} we get the covariance for the increment
$\Ex{\xi_1|[\xi_2,\xi_3]}-\Ex{\xi_1|\xi_2}$,
\begin{align*}
  &\Cov{\Ex{\xi_1|[\xi_2,\xi_3]}-\Ex{\xi_1|\xi_2},\Ex{\xi_1|[\xi_2,\xi_3]}-\Ex{\xi_1|\xi_2}} \\
  &=
  \Cov{\xi_1-\Ex{\xi_1|\xi_2},\xi_3-\Ex{\xi_3|\xi_2}}\Cov{\xi_3-\Ex{\xi_3|\xi_2},\xi_3-\Ex{\xi_3|\xi_2}}^+
  \\ & \hspace{60mm} \times \Cov{\xi_3-\Ex{\xi_3|\xi_2},\xi_1-\Ex{\xi_1|\xi_2}},
\end{align*}
and further, the $L^2(\Omega;\sX)$-norm of the increment is given by
\begin{align} \nonumber
  &\Ex{\norm{\Ex{\xi_1|[\xi_2,\xi_3]}-\Ex{\xi_1|\xi_2}}_{\sX}^2}\\  \label{eq:Lincr}  &=\tr\Big(
  \Cov{\xi_1-\Ex{\xi_1|\xi_2},\xi_3-\Ex{\xi_3|\xi_2}}\Cov{\xi_3-\Ex{\xi_3|\xi_2},\xi_3-\Ex{\xi_3|\xi_2}}^+
  \\ \nonumber & \hspace{60mm} \times \Cov{\xi_3-\Ex{\xi_3|\xi_2},\xi_1-\Ex{\xi_1|\xi_2}}
  \Big).
\end{align}
This fact will be used multiple times in the proofs below.

The familiar discrete time Kalman filter equations follow directly
from \eqref{eq:KF_m} and \eqref{eq:KF_P} if $\xi_1$ is chosen to be the
current state $x_i$ that is to be estimated, $\xi_2$ consists of the old
outputs $[y_1,\dots,y_{i-1}]$, and $\xi_3$ is the new output $y_i$.

\section{The case without input noise} \label{sec:noiseless}

For simplicity of presentation, let us first go through the case without input noise. In this case the solution to \eqref{eq:system} is simply $z(t)=e^{At}x$.


The convergence rate estimates are based on computing how much $\hat
z_{T,n}$ can change at most (measured with the $L^2(\Omega ;
\sX)$-norm) when more and more output values $y(t)$ are taken into
account from the intervals $t \in ((i-1)T/n,iT/n)$ for $i=1,\dots,n$. In
this section, it is first shown how an intermediate measurement is taken
into account.  Consider the output of the system \eqref{eq:system},
$dy(t)=Ce^{At}x \, dt + dw(t)$, which is a shortened notation for
\begin{equation} \label{eq:output}
  y(t)=C \int_0^t e^{As}x \, ds + w(t)
\end{equation}
where $A$ and $C$ are operators from $\sX$ to
$\sX$ and $\sY=\mathbb{R}^r$, respectively, and $w$ is an $r$-dimensional Brownian
motion with incremental covariance matrix $R$. 

Assume we have a state estimate $\tilde x_j:=\Ex{x|\{y(t_1),
  y(t_2),\dots,y(t_j)\}}$ for the initial state $x$, and the
corresponding error covariance $P_j:=\Cov{x-\tilde x_j,x-\tilde
  x_j}$. Now the next measurement to be taken into account in state
estimation is $y(t_{j+1})$. Say $t_{j+1} \in (t_a,t_b)$ for some $a,b
\in \{1,\dots,j\}$ and that this interval does not contain any earlier
included measurements, that is $t_i \notin (t_a,t_b)$ for
$i=1,\dots,j$. The new state estimate $\tilde x_{j+1}$ and the
corresponding error covariance $P_{j+1}:=\Cov{x-\tilde
  x_{j+1},x-\tilde x_{j+1}}$ are given by \eqref{eq:KF_m} and
\eqref{eq:KF_P}, respectively, if we set $\xi_1=x$, $\xi_2=[y(t_1),
y(t_2),\dots,y(t_j)]$, and $\xi_3=y(t_{j+1})$.

To get a simple representation for the covariances in \eqref{eq:KF_m}
and \eqref{eq:KF_P}, define a new output
\begin{equation} \nonumber
  \tilde{y}:=y(t_{j+1})-\frac{t_b-t_{j+1}}{t_b-t_a}y(t_a)-\frac{t_{j+1}-t_a}{t_b-t_a}y(t_b).
\end{equation}
That is, $\tilde y$ is $y(t_{j+1})$ from which the linear interpolant
between $y(t_a)$ and $y(t_b)$ has been removed.
By plugging \eqref{eq:output} here, this can be written in the form
$\tilde y=\tilde C x + \tilde w$ where
\begin{align*}  
\tilde C &= C \int_0^{t_{j+1}}e^{As} \, ds
  -C\frac{t_b-t_{j+1}}{t_b-t_a}\int_0^{t_a}e^{As} \, ds
  -C\frac{t_{j+1}-t_a}{t_b-t_a}\int_0^{t_b}e^{As} \, ds \\
&=C \left( \frac{t_b-t_{j+1}}{t_b-t_a}
    \int_{t_a}^{t_{j+1}} e^{As} \, ds -
    \frac{t_{j+1}-t_a}{t_b-t_a}\int_{t_{j+1}}^{t_b} e^{As} \, ds \right)
\end{align*}
and 
\[
\tilde w =
w(t_{j+1})-\frac{t_b-t_{j+1}}{t_b-t_a}w(t_a)-\frac{t_{j+1}-t_a}{t_b-t_a}w(t_b).
\]
Since $w$ is Brownian motion, it holds that $\tilde w \sim N
\left(0,\frac{(t_{j+1}-t_a)(t_b-t_{j+1})}{t_b-t_a}R \right)$ and
$\tilde w$ is independent of the already included measurements (that
is, of $\xi_2$) and hence of $\tilde x_j$, as well. Thus $\Ex{\tilde
  y|\xi_2}=\tilde C \tilde x_j$,
\[
\Cov{x-\tilde x_j,\tilde y-\tilde C \tilde x_j}=P\tilde C^*,
\]
and
\[
\Cov{\tilde y-\tilde C \tilde x_j,\tilde y-\tilde C \tilde x_j}=\tilde
CP\tilde C^*+\frac{(t_{j+1}-t_a)(t_b-t_{j+1})}{t_b-t_a}R.
\]

By \eqref{eq:KF_m}, the new estimate $\tilde
x_{j+1}:=\Ex{x|\{y(t_1),y(t_2),\dots,y(t_{j+1})\}}$ is given by
\begin{equation} \label{eq:new_est} \tilde x_{j+1}=\tilde x_j+P_j
  \tilde C^* \left( \tilde C P_j \tilde
    C^*+\frac{(t_{j+1}-t_a)(t_b-t_{j+1})}{t_b-t_a}R \right)^{-1}\left( \tilde
    y - \tilde C \tilde x_j \right)
\end{equation}
and by \eqref{eq:KF_P}, the new error covariance $P_{j+1}:=\Cov{x-\tilde
  x_{j+1},x-\tilde x_{j+1}}$ by
\begin{equation} \label{eq:errCov} P_{j+1} = P_j- P_j \tilde C^* \left(
    \tilde C P_j \tilde C^*+\frac{(t_{j+1}-t_a)(t_b-t_{j+1})}{t_b-t_a}R
  \right)^{-1} \tilde C P_j.
\end{equation}

This will be used with $t_b-t_{j+1}=t_{j+1}-t_a=h$, and we define
\begin{equation} \label{eq:C_h} C_h(t)x:=\frac{C}{2}\left(
    \int_{t-h}^t e^{As}x \, ds - \int_t^{t+h} e^{As}x \, ds \right),
  \qquad \textrm{for } t \ge h >0.
\end{equation}
\begin{lemma} \label{lem:C_hbd} 
   If $C\in \mathcal{L}(\sX,\sY)$ then for $t \in [h,T-h]$ it holds that
\begin{itemize}
\item[(i)] $\ \norm{C_h(t)}_{\mathcal{L}(\sX,\sY)}\le
  h\mu\norm{C}_{\mathcal{L}(\sX,\sY)} \ $ and
\item[(ii)] $\ \norm{C_h(t)}_{\mathcal{L}(\dom(A),\sY)}\le
  \displaystyle\frac{h^2}{2} \mu\norm{A}_{\mathcal{L}(\dom(A),\sX)}
  \norm{C}_{\mathcal{L}(\sX,\sY)}$.
\end{itemize}
In the finite dimensional case $\norm{A}_{\mathcal{L}(\dom(A),\sX)}$ means
plainly the matrix norm of $A$. In the infinite dimensional case
$\norm{A}_{\mathcal{L}(\dom(A),\sX)}=1$ because $\dom(A)$ is equipped
with the graph norm of $A$.
\end{lemma}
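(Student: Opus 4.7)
The plan is to prove (i) by direct triangle-inequality estimates and to prove (ii) by exploiting a cancellation between the two integrals in the definition of $C_h(t)$. For (i), I would apply the triangle inequality in $\sY$ together with the semigroup bound $\norm{e^{As}}_{\mathcal{L}(\sX)}\le\mu$, which is available because $s\in[t-h,t+h]\subseteq[0,T]$ when $t\in[h,T-h]$. This immediately yields
\[
\norm{C_h(t)x}_\sY\le\frac{\norm{C}_{\mathcal{L}(\sX,\sY)}}{2}\left(\int_{t-h}^{t}\norm{e^{As}x}_\sX\,ds+\int_t^{t+h}\norm{e^{As}x}_\sX\,ds\right)\le h\mu\norm{C}_{\mathcal{L}(\sX,\sY)}\norm{x}_\sX,
\]
which is exactly (i).

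For (ii), the same crude estimate would again give only an $O(h)$ bound, so the argument must be refined. The key point is that the two integrals in $C_h(t)$ are over adjacent intervals of equal length $h$. Changing variables by $s=t-h+u$ in the first and $s=t+u$ in the second, and using $e^{A(t+u)}=e^{A(t-h+u)}e^{Ah}$, the difference becomes
\[
\int_{t-h}^t e^{As}x\,ds-\int_t^{t+h}e^{As}x\,ds=\int_0^h e^{A(t-h+u)}(I-e^{Ah})x\,du.
\]
For $x\in\dom(A)$, the standard semigroup identity $e^{Ah}x-x=\int_0^h e^{Av}Ax\,dv$ converts this further into the double integral
\[
-\int_0^h\!\int_0^h e^{A(t-h+u+v)}Ax\,dv\,du,
\]
whose integration region has area $h^2$ and whose integrand is bounded in $\sX$ by $\mu\norm{Ax}_\sX$ (since $t-h+u+v\in[t-h,t+h]\subseteq[0,T]$). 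Multiplying by $\norm{C}_{\mathcal{L}(\sX,\sY)}/2$ and using $\norm{Ax}_\sX\le\norm{A}_{\mathcal{L}(\dom(A),\sX)}\norm{x}_{\dom(A)}$ produces the bound in (ii).

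The only conceptual step is recognizing the cancellation used in (ii): the $O(h)$ contribution $he^{At}x$ coming from each of the two integrals subtracts away because the intervals are placed symmetrically around $t$, leaving behind what is essentially a second-order remainder in the expansion of $s\mapsto\int_0^s e^{Ar}x\,dr$ around $s=t$. The semigroup identity $e^{Ah}x-x=\int_0^h e^{Av}Ax\,dv$ is precisely the tool that extracts this remainder while pulling out a factor of $Ax$, which is what accounts for the $\dom(A)$-valued norm appearing in (ii). I do not anticipate a real obstacle; once the double-integral representation is in hand, the conclusion is just the triangle inequality.
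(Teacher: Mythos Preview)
Your proof is correct and follows essentially the same approach as the paper: part (i) is the direct triangle-inequality estimate, and part (ii) exploits the cancellation of the zeroth-order term together with the semigroup identity $e^{Ah}x-x=\int_0^h e^{Av}Ax\,dv$ to extract a factor $Ax$ and a double integral of total area $h^2$. The only cosmetic difference is that the paper expands $Ce^{As}x$ around $s=t$ via $Ce^{As}x=Ce^{At}x\pm\int CAe^{Ar}x\,dr$ (yielding two triangular integration regions), whereas you first combine the two integrals by substitution and then factor using the semigroup property (yielding a square $[0,h]^2$); both give the same bound.
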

\noindent This could also be shown for more general $\tilde C$ with
$t_b-t_a$ replacing $h$ in {\it (i)} and
$\frac{(t_{j+1}-t_a)^2}2+\frac{(t_b-t_{j+1})^2}2$ replacing $h^2$ in {\it
  (ii)} but that is not needed. Also, part {\it (ii)} can be made a
bit better. In fact, $\norm{C_h(t)x}_{\sY} \le
\frac{h^2}{2}\mu\norm{C}_{\mathcal{L}(\sX,\sY)}\norm{Ax}_{\sX}$.
\begin{proof}

Part {\it (i)} of the Lemma
is clear from the definition~\eqref{eq:C_h} since $\norm{e^{At}}_{\mathcal{L}(\sX)} \le \mu$. For part {\it (ii)}, note that $Ce^{At}x \in
C^1(\mathbb{R}^+;\sY)$ with $\frac{d}{dt}Ce^{At}x=CAe^{At}x$ and
$\norm{CAe^{At}x}_{\sY}\le
\mu\norm{C}_{\mathcal{L}(\sX,\sY)}\norm{A}_{\mathcal{L}(\dom(A),\sX)}\norm{x}_{\dom(A)}$. Then
by Bochner integral properties, $C$ can be taken inside the integral
and thus
\begin{align*}  
&\int_{t-h}^t Ce^{As}x \, ds -
   \int_t^{t+h}Ce^{As}x \, ds \\
= &\int_{t-h}^t \left(
    Ce^{At}x-\int_s^t CAe^{Ar}x \, dr \right)ds-\int_t^{t+h}
  \left(Ce^{At}x+\int_t^s CAe^{Ar}x \, dr \right)ds \\
=&-\int_{t-h}^t \int_s^t CAe^{Ar}x \, dr \, ds - \int_t^{t+h} \int_t^s CAe^{Ar}x \, dr \, ds.
\end{align*}
This together with the bound for $\norm{CAe^{At}x}_{\sY}$ imply {\it
  (ii)}.
\end{proof}

We are now ready to proceed to the actual convergence result which we shall first show in finite dimensional context, namely $\sX = \mathbb{R}^p$. The infinite dimensional generalisation will be treated below.

\begin{theorem} \label{thm:fin_dim} Let now $\sX=\mathbb{R}^p$ and $A
  \in \mathbb{R}^{p \times p}$ and $C \in \mathbb{R}^{r \times p}$
  (with $r \le p$) and let $\hat{z}_{T,n}$ and $\hat{z}(T)$ be as
  defined above in \eqref{eq:estimates}, with $u=0$ in \eqref{eq:system}. Then
\begin{equation} \nonumber
\Ex{\norm{\hat{z}_{T,n}-\hat{z}(T) }_{\sX}^2} \le \frac{MT^3}{n^2}
\end{equation}
where $M=\frac{\mu^2\tr(P_0)\Ex{\norm{\hat z_{T,n}-z(T)}_{\sX}^2}\norm{C}^2\norm{A}^2
}{12\min(\textup{eig}(R))}$.
\end{theorem}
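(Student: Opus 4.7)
The plan is to use a dyadic refinement of the initial grid $\{iT/n\}_{i=1}^n$ and to bound, via the intermediate-measurement formulas \eqref{eq:new_est}--\eqref{eq:Lincr} and Lemma~\ref{lem:C_hbd}, the $L^2(\Omega;\sX)$-norm of each update when a new midpoint is added. Enumerate $\{t_j\}_{j \ge 1}$ so that $t_1,\dots,t_n$ is the initial grid and, for $k \ge 1$, the $2^{k-1}n$ points inserted at level $k$ are exactly the midpoints of the current intervals, each at distance $h_k := T/(n 2^k)$ from its two neighbors. Writing $\tilde x_j := \Ex{x \mid y(t_1),\dots,y(t_j)}$ and $P_j := \Cov{x-\tilde x_j, x-\tilde x_j}$, the corresponding estimate of $z(T)=e^{AT}x$ is $e^{AT}\tilde x_j$; in particular $e^{AT}\tilde x_n = \hat z_{T,n}$, and by the martingale argument of Section~\ref{sub:stochastics} we have $e^{AT}\tilde x_j \to \hat z(T)$ in $L^2(\Omega;\sX)$. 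Since $\{e^{AT}\tilde x_j\}$ is a square-integrable $\sX$-valued martingale, Lemma~\ref{lem:tele} (taking $L \to \infty$, justified by nonnegativity of the summands) gives
\begin{equation*}
\Ex{\norm{\hat z_{T,n} - \hat z(T)}_{\sX}^2} = \sum_{j=n}^{\infty} \Ex{\norm{e^{AT}(\tilde x_{j+1}-\tilde x_j)}_{\sX}^2}.
\end{equation*}

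For $t_{j+1}$ a level-$k$ midpoint, the derivation preceding \eqref{eq:new_est} identifies $\tilde C = C_{h_k}(t_{j+1})$ and the innovation noise covariance as $\tfrac{h_k}{2}R$, so \eqref{eq:Lincr} together with the bound $(C_{h_k}P_j C_{h_k}^* + \tfrac{h_k}{2}R)^{-1} \le \tfrac{2}{h_k \min(\textup{eig}(R))}I$ yields
\begin{equation*}
\Ex{\norm{e^{AT}(\tilde x_{j+1}-\tilde x_j)}_{\sX}^2} \le \frac{2}{h_k \min(\textup{eig}(R))} \sum_{m=1}^p \norm{C_{h_k} P_j e^{A^*T} e_m}_{\sY}^2.
\end{equation*}
Applying the pointwise refinement of Lemma~\ref{lem:C_hbd}(ii), namely $\norm{C_{h_k} x}_{\sY} \le \tfrac{h_k^2}{2}\mu\norm{C}\norm{Ax}_{\sX}$, the sum on the right is bounded by $\tfrac{h_k^4}{4}\mu^2\norm{C}^2 \tr(e^{AT} P_j A^* A P_j e^{A^*T})$. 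I then estimate the remaining trace by $\norm{A}^2 \norm{P_j}_{\mathrm{op}} \tr(e^{AT} P_j e^{A^*T})$; using $P_j \le P_0$ (so $\norm{P_j}_{\mathrm{op}} \le \tr(P_0)$) and $\tr(e^{AT} P_j e^{A^*T}) \le \tr(e^{AT} P_n e^{A^*T}) = \Ex{\norm{\hat z_{T,n}-z(T)}_{\sX}^2}$ for $j \ge n$, the per-increment bound collapses to
\begin{equation*}
\frac{h_k^3 \mu^2 \norm{C}^2 \norm{A}^2 \tr(P_0) \Ex{\norm{\hat z_{T,n}-z(T)}_{\sX}^2}}{2\min(\textup{eig}(R))}.
\end{equation*}

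Summing this uniform bound over the $n 2^{k-1}$ insertions at level $k$ and then over $k \ge 1$, the geometric series
\begin{equation*}
\sum_{k=1}^{\infty} n\, 2^{k-1} h_k^3 = \frac{T^3}{n^2}\sum_{k=1}^{\infty} 2^{-2k-1} = \frac{T^3}{6n^2}
\end{equation*}
produces exactly $M T^3/n^2$ with $M$ as in the statement. The main obstacle is arranging the operator-norm/trace split in the last step so that one factor of $P_j$ produces $\tr(P_0)$ while the other, sandwiched between the two $e^{AT}$'s, produces $\Ex{\norm{\hat z_{T,n}-z(T)}_{\sX}^2}$; this is precisely what the sharper pointwise form of Lemma~\ref{lem:C_hbd}(ii) enables, whereas a naive use of part~(i) would produce $\tr(P_0)^2$ in place of $\tr(P_0)\Ex{\norm{\hat z_{T,n}-z(T)}_{\sX}^2}$. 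A smaller point to verify is that the order in which bisection points are added does not affect the limit $\hat z(T)$, which follows from the tower property of conditional expectations together with the density of $\{t_j\}$ in $[0,T]$.
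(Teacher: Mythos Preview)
Your proposal is correct and follows essentially the same route as the paper: the same dyadic midpoint refinement, the martingale telescope identity, the same per-increment bound via $(C_hP_jC_h^*+\tfrac{h}{2}R)^{-1}\le \tfrac{2}{h\min(\textup{eig}(R))}I$ together with Lemma~\ref{lem:C_hbd}(ii), and the same geometric sum $\sum_{k\ge 1}2^{k-1}n\,h_k^3=T^3/(6n^2)$. The only cosmetic difference is in how the sum $\sum_m\norm{C_hP_je^{A^*T}e_m}_{\sY}^2$ is split: the paper rewrites each summand as $\norm{\Ex{C_h(\tilde x_j-x)\ip{e^{AT}(\tilde x_j-x),e_m}}}_{\sY}^2$ and applies Cauchy--Schwarz in $L^2(\Omega)$ to factor out $\tr(C_hP_jC_h^*)\,\Ex{\norm{\tilde z_j-z(T)}_{\sX}^2}$, whereas you apply the pointwise $C_h$-bound to the deterministic vectors $P_je^{A^*T}e_m$ and then use $P_j^2\le\norm{P_j}_{\textup{op}}P_j$ to split the resulting trace; both produce the identical constant.
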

\noindent The constant $M$ depends on $n$ through $\Ex{\norm{\hat z_{T,n}-z(T)}_{\sX}^2}$ which is the trace of the error covariance of the discrete time state estimate $\hat
z_{T,n}$. In order to get a strict \emph{a priori} result, this term can be bounded by $\Ex{\norm{\hat z_{T,n}-z(T)}_{\sX}^2} \le \mu^2 \tr(P_0)$.
\begin{proof}
   In the beginning of this section, the output \eqref{eq:output} is considered as a signal parameterized by the initial state $x$ and corrupted by noise $w$. Therefore, it seems beneficial to consider $\Ex{x|\mathcal{F}_j}$ from which estimates for $z(T)$ can be obtained through $\Ex{z(T)|\mathcal{F}_j}=e^{AT}\Ex{x|\mathcal{F}_j}$.

   The outline of the proof is as follows. First, we define the
  martingale $\tilde z_j=e^{AT}\tilde x_j$ where $\tilde x_j$ is the martingale $\tilde x_j:=\Ex{x|\mathcal{F}_j}$, where $\mathcal{F}_j=\sigma\{
  y(t),t \in \textup{T}_j \}$ and $\textup{T}_j=\{ t_i \}_{i=1}^j$ --- as explained in Section~\ref{sub:stochastics}. The time points $\{ t_i \}_{i=1}^{n}$ are $t_i=iT/n$ but  $\{ t_i \}_{i=n+1}^{\infty}$
  will be defined later.
    The martingales are Gaussian and hence square integrable, and so by Lemma~\ref{lem:tele}, we
  have the following telescope identity for $L,N \in \mathbb{N}$ with
  $L \ge N$:
\begin{equation} \label{eq:telescope}
\Ex{\norm{\tilde z_L-\tilde
        z_N}_{\sX}^2}=\sum_{j=N}^{L-1} \Ex{\norm{\tilde z_{j+1}-\tilde
        z_j}_{\sX}^2}.
\end{equation}
Second, we find an upper bound for $\Ex{\norm{\tilde z_{j+1}-\tilde
    z_j}_{\sX}^2}$ using the results of
Section~\ref{sub:Kalman} and the beginning of this section. Third, we prove that the sum in
\eqref{eq:telescope} converges as $L \to \infty$ and thus $\tilde z_j$
is a Cauchy sequence in $L^2(\Omega;\sX)$. It has a limit in this
space by completeness and the limit must be $\hat z(T)$ by the
considerations in Section~\ref{sub:stochastics}. Also, setting $N=n$
(we have $\tilde z_n=\hat z_{T,n}$) and letting $L \to \infty$ in
\eqref{eq:telescope} gives $\Ex{\norm{\hat{z}_{T,n}-\hat{z}(T) }_{\sX}^2}$.

{\bf (I) Martingale $\tilde z_j$:} Let $t_i=iT/n$ for
$i=1,\dots,n$. Then $\tilde z_j$ for $j=1,\dots,n$ are the state estimates
from the discrete time Kalman filter and, in particular, $\tilde
z_n=\hat z_{T,n}$  defined in \eqref{eq:estimates}.  The idea is to
then halve the intervals $((l-1)T/n,lT/n)$ for $l=1,\dots,n$ between the
already included measurements. That is, we include measurements $y(t_i)$ where $i=n+1,...,2n$, and $t_i=\frac{(i-n-1/2)T}{n}$.
Then we halve the
new intervals $((l-1)T/2n,lT/2n)$ for $l=1,\dots,2n$ by including $2n$
measurements $y(t_i)$ for $i=2n+1,...,4n$ and $t_i= \frac{(i-2n-1/2)T}{2n}$
and so on. This addition of new time points is illustrated in Fig.~1.

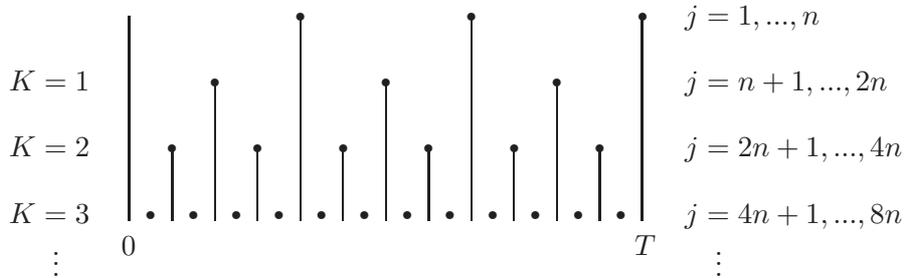
\begin{figure}[b]
\center
\begin{picture}(270,105)
\thicklines
\put(10,100){\line(0,-1){77}}
\put(202,100){\line(0,-1){77}}

\thinlines
\put(74,100){\circle*{3}}
\put(74,100){\line(0,-1){77}}
\put(138,100){\circle*{3}}
\put(138,100){\line(0,-1){77}}
\put(202,100){\circle*{3}}

\put(42,75){\circle*{3}}
\put(42,75){\line(0,-1){52}}
\put(106,75){\circle*{3}}
\put(106,75){\line(0,-1){52}}
\put(170,75){\circle*{3}}
\put(170,75){\line(0,-1){52}}

\put(26,50){\circle*{3}}
\put(26,50){\line(0,-1){27}}
\put(58,50){\circle*{3}}
\put(58,50){\line(0,-1){27}}
\put(90,50){\circle*{3}}
\put(90,50){\line(0,-1){27}}
\put(122,50){\circle*{3}}
\put(122,50){\line(0,-1){27}}
\put(154,50){\circle*{3}}
\put(154,50){\line(0,-1){27}}
\put(186,50){\circle*{3}}
\put(186,50){\line(0,-1){27}}

\put(18,25){\circle*{3}}
\put(34,25){\circle*{3}}
\put(50,25){\circle*{3}}
\put(66,25){\circle*{3}}
\put(82,25){\circle*{3}}
\put(98,25){\circle*{3}}
\put(114,25){\circle*{3}}
\put(130,25){\circle*{3}}
\put(146,25){\circle*{3}}
\put(162,25){\circle*{3}}
\put(178,25){\circle*{3}}
\put(194,25){\circle*{3}}

\put(218,97){$j = 1,...,n$}
\put(218,72){$j=n+1,...,2n$}
\put(218,47){$j=2n+1,...,4n$}
\put(218,22){$j=4n+1,...,8n$}

\put(-35,72){$K=1$}
\put(-35,47){$K=2$}
\put(-35,22){$K=3$}

\put(7,10){0}
\put(199,10){$T$}
\put(229,2){$\vdots$}
\put(-19,2){$\vdots$}

\label{fig:times}
\end{picture}
\caption{Illustration of the time point addition scheme in the construction of the martingales $\tilde x_j$ and $\tilde z_j$.}
\end{figure}

{\bf (II) Increment $\tilde z_{j+1}-\tilde z_j$:} Assume that the
current state estimate is $\tilde{z}_j=e^{AT}\tilde{x}_j$ with $j \ge n$, the
corresponding error covariance matrices are $e^{AT}P_je^{A^*T}$ and $P_j$, respectively, and the next
measurement being included is $y(t_{j+1})$ with $j+1 \in \{ 2^{K-1}n+1,...,2^Kn \}$ for some $K =1,2,...$ (see Fig.~1). Then $t_{j+1}=(2(j-2^{K-1}n)+1)h$ with $h=\frac{T}{2^Kn}$.
The new initial state estimate $\tilde x_{j+1}$ is then
given by \eqref{eq:new_est} with $\tilde
C=C_h\left((2(j-2^{K-1}n)+1)h \right)$ --- denoted below simply
by $C_h$ --- and $h=\frac{T}{2^Kn}$. We are only interested in the
$L^2(\Omega;\sX)$-norm of the $\tilde z$-process increment, and as discussed in
Section~\ref{sub:Kalman}, it is obtained from the covariance increment
given in~\eqref{eq:errCov}:
\[
\Ex{\norm{\tilde z_{j+1}-\tilde z_j}_{\sX}^2}=\tr\left(e^{AT}P_jC_h^*(C_hP_jC_h^*+h/2 \,
  R)^{-1}C_hP_je^{A^*T} \right).
\]
Now we wish to establish a bound for this trace.  To this end, recall
that the norm of the inverse of a positive definite matrix is $
\norm{Q^{-1}}=\frac1{\min(\textup{eig}(Q))}$, and thus,
\begin{equation} \label{eq:C_R} \norm{ \left( C_h P_j C_h^*+\frac{h}{2}R
    \right)^{-1}} \le \frac{2}{h\min(\textup{eig}(R))}=:\frac{C_R}{h}.
  \end{equation}
  Using this and part {\it (ii)} of Lemma~\ref{lem:C_hbd} gives
\begin{align} \nonumber
  & \hspace{-14mm}\tr \! \left( \! e^{AT}P_j C_h^* \left( C_h P_j
      C_h^*+\frac{h}{2}R \right)^{\! -1} \!\! C_h P_j e^{A^*T}\right) \\ \nonumber  &=\sum_{k=1}^p 
  \ip{C_hP_je^{A^*T}e_k,\left( C_h P_j C_h^*+\frac{h}{2}R \right)^{\! -1} \!\! C_h P_je^{A^*T}e_k} \\
  \nonumber
  \hspace{20mm}& \le \frac{C_R}{h}\sum_{k=1}^p \norm{C_hP_je^{A^*T}e_k}_{\sY}^2=\frac{C_R}{h}\sum_{k=1}^p \norm{\Ex{C_h(\tilde x_j-x)\ip{e^{AT}(\tilde x_j-x),e_k}_{\sX}}}_{\sY}^2 
\\
  \nonumber & \le \frac{C_R}{h}\Ex{\norm{C_h(\tilde
      x_j-x)}_{\sY}^2} \sum_{k=1}^p \Ex{\ip{e^{AT}(\tilde x_j-x),e_k}_{\sX}^2} \\  \label{eq:inter} &\le \frac{C_R}{h} \tr(C_hP_jC_h^*)\Ex{\norm{\tilde z_j-z(T)}_{\sX}^2}
\\
\label{eq:bdbd}
& \le \frac{h^3}{2\min(\textup{eig}(R))}\mu^2\norm{C}^2\norm{A}^2\tr(P_j)\Ex{\norm{\tilde z_j-z(T)}_{\sX}^2}.
\end{align}

{\bf (III) Convergence:} It holds that 
$\tr(P_j) \le \tr(P_0)$ and $\Ex{\norm{\tilde z_j-z(T)}_{\sX}^2} \le \Ex{\norm{\hat z_{T,n}-z(T)}_{\sX}^2}$.  In part (II) of the proof we had
$h=2^{-K}T/n$ and that bound is used for all $2^{K-1}n$ new
measurements corresponding to this $h$.  Finally, setting $N=n$ and $L
\to \infty$ in \eqref{eq:telescope} and using \eqref{eq:bdbd} to bound
the terms of the sum yields
\begin{align*} 
  \Ex{\norm{\hat{z}_{T,n}-\hat{z}(T)}_{\sX}^2} &\le \sum_{K=1}^{\infty}
  2^{K-1}n \left( \frac{T}{2^Kn} \right)^{\!
    3}\frac{\mu^2\tr(P_0)\Ex{\norm{\hat z_{T,n}-z(T)}_{\sX}^2}\norm{C}^2\norm{A}^2}{2\min(\textup{eig}(R))}
  \\ &=\frac{\mu^2\tr(P_0)\Ex{\norm{\hat z_{T,n}-z(T)}_{\sX}^2}\norm{C}^2\norm{A}^2
    T^3}{12\min(\textup{eig}(R))n^2}
\end{align*}
completing the proof.
\end{proof}

\section{Input noise} \label{sec:noise}

The case with input noise follows exactly the same outline as the case without input noise. The solution to \eqref{eq:system} is given by the Wiener integral
\[
z(t)=e^{At}x+\int_0^t e^{A(t-s)}Bdu(s).
\]
The idea now is to consider the output $y(t)$ as a process that is parameterized by the initial state $x$ and the input noise process $u$. To this end, let us define the solution operator $S(t)$ through
\begin{equation} \label{eq:Sdef}
S(t):[x,u] \mapsto e^{At}x+\int_0^t e^{A(t-s)}Bdu(s)
\end{equation}
and so $z(t)=S(t)[x,u]$. Then the state estimate over a given sigma algebra $\sigma$ is given by
\[
\Ex{z(T)|\sigma}=S(T)\Ex{[x,u] | \sigma}.
\]
Hence, we shall virtually consider the estimate of the combined initial state $x$ and the noise process $u$ and then the actual state estimate is obtained through $S(T)$.
\begin{theorem} \label{thm:noise}
Let now $\sX=\mathbb{R}^p$ and $A
  \in \mathbb{R}^{p \times p}$ and $C \in \mathbb{R}^{r \times p}$
  (with $r \le p$) and let $\hat z_{T,n}$ and $\hat z(T)$ be as defined in \eqref{eq:estimates}. Then \\
\[
\Ex{\norm{\hat z_{T,n}-\hat z(T)}_{\sX}^2 }\le \frac{M_1 T^2}n + \frac{M_2 T^3}{n^2} + \frac{M_3 T^4}{n^2}
\] 
 where $M_1=\frac{\norm{C}^2 \tr(BQB^*)\Ex{\norm{\hat z_{T,n}-z(T)}_{\sX}^2}}{\min(\textup{eig}(R))}$, $M_2=\frac{\mu^2\norm{A}^2 \norm{C}^2 \tr(P_0)\Ex{\norm{\hat z_{T,n}-z(T)}_{\sX}^2}}{12\min(\textup{eig}(R))} $, and  $M_3=\frac{\mu^2\norm{C}^2\tr(ABQB^* \! A^*)\Ex{\norm{\hat z_{T,n}-z(T)}_{\sX}^2}}{2\min(\textup{eig}(R))}$.
\end{theorem}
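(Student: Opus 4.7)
The plan is to follow the architecture of the proof of Theorem~\ref{thm:fin_dim}: build a martingale $\tilde z_j := \Ex{z(T) \mid \mathcal{F}_j}$ with $\mathcal{F}_j = \sigma\{y(t_i):i\le j\}$ on the same halving grid as before, apply the telescope identity of Lemma~\ref{lem:tele}, bound each increment $\Ex{\norm{\tilde z_{j+1} - \tilde z_j}_{\sX}^2}$, and sum over the halving levels $K\ge 1$. Convergence $\tilde z_j \to \hat z(T)$ in $L^2(\Omega;\sX)$ follows from Section~\ref{sub:stochastics} combined with Gaussianity of the martingale.

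For an increment at level $K$, set $h = T/(2^K n)$ and let $(t_a,t_b)$ be the enclosing interval of already-sampled points, with $t_b - t_{j+1} = t_{j+1} - t_a = h$. Consider $\tilde y := y(t_{j+1}) - \tfrac12(y(t_a) + y(t_b))$. Substituting $z(t) = S(t)[x,u]$ from~\eqref{eq:Sdef} decomposes this as
\[
\tilde y = C_h x + C\tilde v + \tilde w,
\]
with $C_h$ as in~\eqref{eq:C_h}, Brownian-bridge residual $\tilde w \sim N(0,hR/2)$ (independent of $\mathcal{F}_j$ by the same independent-increments argument as in Theorem~\ref{thm:fin_dim}), and a new piece $\tilde v := v(t_{j+1}) - \tfrac12(v(t_a)+v(t_b))$ depending only on $u$, where $v(t) := \int_0^t \int_0^s e^{A(s-r)}B\, du(r)\, ds$. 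Since $\tilde w \perp \mathcal{F}_j$, we still have $\norm{M^{-1}}\le C_R/h$ for the innovation covariance $M := \Cov{\tilde y - \Ex{\tilde y \mid \mathcal{F}_j}}$. Because $\tilde w$ is also independent of $z(T)$, it drops out of the cross-covariance in~\eqref{eq:Lincr}, and the Cauchy--Schwarz step from Theorem~\ref{thm:fin_dim} yields
\[
\Ex{\norm{\tilde z_{j+1} - \tilde z_j}_{\sX}^2}\le \frac{C_R}{h}\bigl[\tr(C_h P_0 C_h^*) + \norm{C}^2\tr(\Cov{\tilde v})\bigr]\,\Ex{\norm{\hat z_{T,n}-z(T)}_{\sX}^2}.
\]

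Lemma~\ref{lem:C_hbd}(ii) makes the first bracket term of order $h^4$, which sums exactly as in Theorem~\ref{thm:fin_dim} to the $M_2 T^3/n^2$ piece. Writing $\Cov{\tilde v} = \int_0^{t_b} K(r)Q K(r)^*\, dr$, the remaining task is to split the integral at $t_a$:
\begin{itemize}
\item[(i)] For $r \in [0, t_a]$ the kernel $K(r) = G(t_{j+1},r) - \tfrac12(G(t_a,r)+G(t_b,r))$ with $G(t,r) := \int_r^t e^{A(s-r)}B\, ds$ is the midpoint quadratic-interpolation error of the smooth map $t\mapsto G(t,r)$. Taylor's theorem with the commutation of $A$ and $e^{A\tau}$ gives $\tr(KQK^*) \le c\, h^4 \mu^2 \tr(ABQB^*A^*)$, and integration over $[0,t_a]$ combined with multiplication by $C_R/h$ and the summation $\sum_K 2^{K-1}n(T/(2^K n))^3 = O(T^3/n^2)$, together with an extra factor $T$ from $t_a \le T$, produce the $M_3 T^4/n^2$ term.
\item[(ii)] For $r \in [t_a, t_b]$ the kernel $K(r)$ is only of order $h$ (a difference of two incomplete integrals of $e^{A\sigma}B$), so $\int_{t_a}^{t_b}\tr(KQK^*)\, dr \le c'\, h^3\, \tr(BQB^*)$; multiplication by $C_R/h$ and the summation $\sum_K 2^{K-1}n(T/(2^K n))^2 = O(T^2/n)$ produce the $M_1 T^2/n$ term.
\end{itemize}

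The main technical obstacle lies in case~(ii): one must further split the integral at $r = t_{j+1}$ and perform a low-order Taylor expansion of $\int_0^{\alpha} e^{A\sigma} d\sigma - \int_\alpha^{\alpha+h} e^{A\sigma}d\sigma$ to isolate the leading $h$-dependence in a form that preserves the factor $\tr(BQB^*)$, cleanly separating this $T^2/n$ piece from the subleading $AB$-dependent contributions already absorbed into $M_3$. This bookkeeping is a direct analogue of the analysis behind Lemma~\ref{lem:C_hbd}.
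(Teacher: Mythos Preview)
Your proposal is correct and follows the same architecture as the paper: the halving-grid martingale, the telescope identity, the Cauchy--Schwarz reduction to $\frac{C_R}{h}\,\Ex{\norm{\Ch[x,u]}_{\sY}^2}\,\Ex{\norm{\hat z_{T,n}-z(T)}_{\sX}^2}$, the independence split into the $C_h x$ piece (handled by Lemma~\ref{lem:C_hbd}(ii)) and the input-noise piece, and the final level-wise summation.

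The only organisational difference is in how the input-noise term is decomposed. The paper first applies the semigroup identity
\[
\int_0^t e^{A(t-s)}B\,du(s)=\int_0^{t_{j+1}} e^{A(t-s)}B\,du(s)+\int_{t_{j+1}}^t A\int_0^s e^{A(s-r)}B\,du(r)\,ds+B\bigl(u(t)-u(t_{j+1})\bigr)
\]
to separate, \emph{before} taking covariances, a ``smooth'' part (carrying the factor $AB$ and yielding the $h^4\,\tr(ABQB^*A^*)$ contribution) from a pure Brownian part $B(u(t)-u(t_{j+1}))$ (yielding the $h^3\,\tr(BQB^*)$ contribution). You instead apply It\^o isometry first to obtain $\Cov{\tilde v}=\int_0^{t_b}K(r)QK(r)^*\,dr$ and then split the $r$-integral at $t_a$ (with a further split at $t_{j+1}$), using a second-order Taylor estimate on $t\mapsto G(t,r)$ for $r<t_a$ and a direct $O(h)$ bound on $K(r)$ for $r\in[t_a,t_b]$. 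The two decompositions are equivalent rearrangements of the same calculation; your kernel-based version is arguably a bit more systematic, while the paper's identity makes the provenance of the $\tr(BQB^*)$ versus $\tr(ABQB^*A^*)$ factors slightly more transparent. Either route recovers the stated constants after the same geometric summation over $K$.
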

As in Theorem~\ref{thm:fin_dim}, an \emph{a priori} result is obtained by bounding \linebreak $\Ex{\norm{\hat z_{T,n}-z(T)}_{\sX}^2} \le \mu^2 \tr(P_0)+T\mu^2\tr(BQB^*)$.

In the bound of this theorem, the second term originates from the error in the initial state. From the proof below (after \eqref{eq:bd_inc}) it can be seen that in fact, the different error components can be treated separately (compare \eqref{eq:bd_inc} with \eqref{eq:inter}).
\begin{proof}
The proof follows exactly the same outline as the proof of Theorem~\ref{thm:fin_dim}.
Say we are estimating $[x,u]$ and we have $\tilde x_j := \Ex{[x,u]|\mathcal{F}_j}$ and the corresponding error covariance $\mathbb{P}_j$. Then the state estimate and the corresponding error covariance are given by $\tilde z_j = S(T)\tilde x_j$ and $S(T)\mathbb{P}_jS(T)^*$ --- although in the last equation the formal adjoint $S(T)^*$ is only defined in connection  with the covariance $\mathbb{P}_j$, namely \newline $\mathbb{P}_jS(T)^*h=\Ex{(\tilde x_j-[x,u])\ip{S(T)(\tilde x_j-[x,u]),h}_{\sX}}$.

 Say we are including measurement $y(t_{j+1})$ which can be written as
\[
y(t_{j+1})=C\int_0^{t_{j+1}}e^{As}x \, ds + C \int_0^{t_{j+1}} \!\! \int_0^s e^{A(s-r)}B \, du(r) \, ds + w(t_{j+1}).
\]
As in the previous section, to get an output with output noise that is uncorrelated with the already included outputs, we shall subtract the linear interpolant from $y(t_{j+1})$, namely define
\[
\tilde y = y(t_{j+1})-\frac12y(t_a)-\frac12y(t_b)
\]
where $t_a=t_{j+1}-h$ and $t_b=t_{j+1}+h$ for proper $h$. Now this output can be written as
\[
\tilde y = \Ch[x,u]^T+\tilde w
\]
where $\tilde w \sim N(0,h/2 \, R)$ and $\Ch=[C_h(t_{j+1}), C_{h,u}(t_{j+1})]$ with $C_h(t_{j+1})$ defined in \eqref{eq:C_h} and
\begin{equation} \label{eq:Chu}
C_{h,u}(t)u:=\frac{C}{2}\left( \int_{t-h}^t \int_0^s e^{A(s-r)}Bdu(r) \, ds - \int_t^{t+h} \!\! \int_0^s e^{A(s-r)}Bdu(r) \, ds \right)
\end{equation}
for $t \ge h$. Now the error covariance increment is as before in \eqref{eq:KF_P} and \eqref{eq:errCov} (but with this new output operator $\Ch$) and the $L^2(\Omega;\sX)$-norm increment $\Ex{\norm{\tilde z_{j+1}-\tilde z_j}_{\sX}^2 }$ is  given by
\begin{align}
\Ex{\norm{\tilde z_{j+1}-\tilde z_j}_{\sX}^2 }&=\tr \left(S(T)\mathbb{P}_j\Ch^*\left(\Ch \mathbb{P}_j \Ch^*+\frac{h}{2}R\right)^{-1}\Ch \mathbb{P}_jS(T)^*\right) \nonumber \\
&=\sum_{k=1}^p \ip{\Ch \mathbb{P}_j S(T)^*e_k,\left(\Ch \mathbb{P}_j \Ch^*+\frac{h}{2}R\right)^{-1}\Ch \mathbb{P}_jS(T)^*e_k}_{\!\! \sY} \nonumber \\
&\le \frac{C_R}{h}\sum_{k=1}^p \norm{\Ch \mathbb{P}_j S(T)^* e_k}_{\sY}^2 \nonumber \\
&=\frac{C_R}{h}\sum_{k=1}^p \norm{\Ex{\Ch(\tilde x_j-[x,u])\ip{S(T)(\tilde x_j-[x,u]),e_k}_{\sX} }}_{\sY}^2 \nonumber \\
& \le \frac{C_R}{h} \Ex{\norm{\Ch (\tilde x_j-[x,u])}_{\sY}^2} \sum_{k=1}^p \Ex{\ip{S(T)(\tilde x_j-[x,u]),e_k}_{\sX}^2} \nonumber \\
& \le \frac{C_R}{h} \Ex{\norm{\Ch [x,u]}_{\sY}^2} \Ex{\norm{\hat z_{T,n}-z(T)}_{\sX}^2}. \label{eq:bd_inc}
\end{align}

In order to get a suitable bound for the increment, we must find a bound for the term $\Ex{\norm{\Ch [x,u]}_{\sY}^2} \le  \Ex{\norm{C_h x}_{\sY}^2}+\Ex{\norm{C_{h,u} u}_{\sY}^2}$ (recall that $x$ and $u$ are independent). As in the proof of Theorem~\ref{thm:fin_dim}, by Lemma~\ref{lem:C_hbd}, the first part is bounded by
\begin{equation} \label{eq:bd_Chx}
\Ex{\norm{C_h x}_{\sY}^2} \le \frac{h^4}{4} \mu^2\norm{A}^2 \norm{C}^2 \tr(P_0)
\end{equation}
so then remains the input noise induced term. To evaluate $C_{h,u}(t_{j+1})u$, note that
\[
\int_0^t e^{A(t-s)}Bdu(s)=\int_0^{t_{j+1}} \! e^{A(t-s)}Bdu(s) +\int_{t_{j+1}}^t \! A \int_0^s e^{A(s-r)}Bdu(r) \, ds + \int_{t_{j+1}}^t B du(s)
\]
for $t \ge t_{j+1}$ --- for $t < t_{j+1}$, just change $t_{j+1} \leftrightarrow t$ in the bounds of the last two integrals and put minus signs in front of them. Of course the last term is just $\int_{t_{j+1}}^t B du(s)=B(u(t)-u(t_{j+1}))$. Applying this to \eqref{eq:Chu} gives
\begin{align}
\nonumber
C_{h,u}(t_{j+1})u=&-\frac{C}{2} \Bigg[  \int_{t_{j+1}-h}^{t_{j+1}} \left( \int_t^{t_{j+1}} \!\! A \int_0^s e^{A(s-r)}Bdu(r) \, ds + B\big(u(t)-u(t_{j+1})\big) \right) dt \\ \label{eq:Chu_mod}
& \qquad + \int_{t_{j+1}}^{t_{j+1}+h} \left( \int_{t_{j+1}}^t \!\! A \int_0^s e^{A(s-r)}Bdu(r) \, ds + B\big(u(t)-u(t_{j+1})\big) \right) dt \Bigg]. 
\end{align}
These two terms are very similar by nature so it suffices to find a bound for one of them and 
use the same bound for both terms. Thus, let us consider the first part of the latter term, namely
\begin{align*}
&\frac{C}{2}\int_{t_{j+1}}^{t_{j+1}+h} \!\! \int_{t_{j+1}}^t \!\! A \int_0^s e^{A(s-r)}Bdu(r) \, ds \, dt \\ & =\frac{C}{2}\int_{t_{j+1}}^{t_{j+1}+h} \!\! \int_{t_{j+1}}^t \!\! A \int_0^{t_{j+1}} e^{A(s-r)}Bdu(r) \, ds \, dt+\frac{C}{2}\int_{t_{j+1}}^{t_{j+1}+h} \!\! \int_{t_{j+1}}^t \!\! A \int_{t_{j+1}}^s e^{A(s-r)}Bdu(r) \, ds \, dt \\
& = \frac{C}{2}\int_0^{t_{j+1}} \!\! \int_{t_{j+1}}^{t_{j+1}+h} \!\! (t_{j+1}+h-s)Ae^{A(s-r)}Bds \, du(r) \\ & \quad + \frac{C}{2}\int_{t_{j+1}}^{t_{j+1}+h} \!\! \int_r^{t_{j+1}+h} \!\! ({t_{j+1}}+h-s)Ae^{A(s-r)}Bds \, du(r) \\
&=(I) + (II).
\end{align*}
Then
\begin{align*}
\Cov{(I),(I)}=\frac14\int_0^{t_{j+1}} \!\! \int_{t_{j+1}}^{t_{j+1}+h} \!\! \int_{t_{j+1}}^{t_{j+1}+h} \!\!\! & (t_{j+1}+h-s)(t_{j+1}+h-r) \\ & Ce^{A(s-t)}ABQB^*A^*e^{A^*(r-t)}C^*dr \, ds \, dt
\end{align*}
and from this, using the bound $\norm{e^{At}}_{\mathcal{L}(\sX)} \le \mu$,
\[
\Ex{\norm{(I)}_{\sY}^2}=\tr\big(\Cov{(I),(I)}\big) \le \frac{t_{j+1}h^4}8\mu^2 \norm{C}^2\tr(ABQB^*A^*).
\]
For the second term we have
\begin{align*}
\Cov{(II),(II)}=\frac14 \int_{t_{j+1}}^{t_{j+1}+h} \!\! \int_t^{t_{j+1}+h} \!\! \int_t^{t_{j+1}+h} \!\!\! & (t_{j+1}+h-s)(t_{j+1}+h-r) \\ & CAe^{A(t-s)}ABQB^*A^*e^{A^*(t-r)}C^*dr \, ds \, dt
\end{align*}
and, again,
\[
\Ex{\norm{(II)}_{\sY}^2}=\tr\big(\Cov{(II),(II)}\big) \le \frac{h^5}8 \mu^2 \norm{C}^2\tr(ABQB^*A^*).
\]
In $(I)$, $r \in [0,t_{j+1}]$ and in $(II)$, $r \in [t_{j+1},t_{j+1}+h]$, and thus they are independent.
Using this and the fact that $t_{j+1}+h \le T$, gives 
\[
\Ex{\norm{(I)+(II)}_{\sY}^2} \le \frac{Th^4}8 \mu^2 \norm{C}^2\tr(ABQB^*A^*).
\]
It is well known that
\[
\Cov{ \int_0^h Bu(t) \,dt, \int_0^h Bu(t) \,dt}=\frac{h^3}3 BQB^*
\]
and so
\[
\Ex{\norm{\frac{C}2\int_{t_{j+1}}^{t_{j+1}+h} B(u(t)-u(t_{j+1})) dt}_{\sY}^2} \le \frac{h^3}{12}\norm{C}^2 \tr(BQB^*).
\]
In \eqref{eq:Chu_mod}, the two $B(u(t)-u(t_{j+1}))$-terms are independent (because in the first one, $t \le t_{j+1}$ and in the second, $t \ge t_{j+1}$) and by utilizing this and gathering the above bounds, we get
\begin{align*}
\Ex{\norm{C_{h,u}(t_{j+1})u}_{\sY}^2} &\le 6\Ex{\norm{(I)+(II)}_{\sY}^2}+6  \frac{h^3}{12}\norm{C}^2\tr(BQB^*) \\
& \le \frac{3Th^4}4 \mu^2 \norm{C}^2\tr(ABQB^*A^*)+\frac{h^3}{2}\norm{C}^2 \tr(BQB^*).
\end{align*}
Combining this with \eqref{eq:bd_inc} and \eqref{eq:bd_Chx} gives
\begin{align*}
\Ex{\norm{\tilde z_{j+1}-\tilde z_j}_{\sX}^2 } \le&
 \frac{h^3}{4}\mu^2 C_R \norm{A}^2 \norm{C}^2 \tr(P_0)\Ex{\norm{\hat z_{T,n}-z(T)}_{\sX}^2} \\ & +\frac{3Th^3}4 \mu^2C_R\norm{C}^2\tr(ABQB^*A^*)\Ex{\norm{\hat z_{T,n}-z(T)}_{\sX}^2} \\ & +\frac{h^2}{2}C_R\norm{C}^2 \tr(BQB^*)\Ex{\norm{\hat z_{T,n}-z(T)}_{\sX}^2} \\
=: & C_3 h^3+C_2h^2.
\end{align*}
Using this bound as in the end of the proof of Theorem~\ref{thm:fin_dim} gives
\[
\Ex{\norm{\hat z_{T,n}-\hat z(T)}_{\sX}^2 } \le \frac{C_2T^2}{2n}+\frac{C_3T^3}{6n^2}
\]
completing the proof.
\end{proof}

\section{Generalization to infinite dimensional systems} \label{sec:infdim}

We move on to infinite dimensional state space $\sX$. Compared to
the finite dimensional case, the main difficulty arises from that the
bound for $C_h$ in part {\it (ii)} of Lemma~\ref{lem:C_hbd} utilizes
the differentiability of $Ce^{At}x$ and thus it holds for $x \in
\dom(A)$. A natural assumption that would make it possible to use this
bound is that $x$ is a $\dom(A)$-valued random variable. This is
exactly what is done in Theorem~\ref{thm:bddC_nat}. Before that, in
Theorem~\ref{thm:conv_bddC} we shall see, however, that a reasonable
convergence estimate can be obtained with slightly less smooth initial
state $x$. Before tackling this problem, we present an example
illuminating the necessity of some additional assumptions.

\begin{example} \label{ex:wave} {\rm This example shows that there is
    a system with $C \in \mathcal{L}(\sX,\mathbb{R})$ such that
    $\Ex{\norm{\hat{z}_{T,n} - \hat{z}(T)}_{\sX}^2}$ converges
    arbitrarily slowly where $\hat{z}_{T,n}$ and $\hat{z}(T)$ are
    defined in \eqref{eq:estimates}.  Consider the one-dimensional
    wave equation (without input noise) with augmented state
    vector, \begin{equation} \label{eq:wave} \begin{cases}
        \dfrac{d}{dt} \! \bbm{z_1(s,t) \\ z_2(s,t)} = \bbm{0 & I \\
          \frac{\partial^2}{\partial
            s^2} & 0}\bbm{z_1(s,t) \\ z_2(s,t)}, \quad s \in [0,1], \ t \in \mathbb{R}^+, \\
        z_1(s,0)=0, \ z_2(s,0) = x(s), \\
        dy(t)=Cz(t) \, dt + dw(t) \end{cases} \end{equation} in state
    space $\sX=H_0^1[0,1] \times L^2(0,1)$ and $\dom (A)=
    \left(H^2[0,1] \cap H_0^1[0,1]\right)\times H_0^1[0,1]$. The state is $z(t)=[z_1(t) \ z_2(t)]^T$.
    The
    output operator $C \in \mathcal{L}(\sX,\mathbb{R})$ is given by
    $Cz = \int_0^1 c(s)z_1(s) \, ds$ where $c(s)=\sumk c_k e_k(s)$ with
    some $\{c_k \} \in l^2$ and $\{e_k\}$ is the orthonormal basis in
    $L^2(0,1)$ formed by the sine functions, that is
    $e_k(s)=\frac1{\sqrt{2}}\sin(k \pi s)$.  The initial velocity is
    $x= \sumk a_k e_{2^k}$ where $a_k \sim N(0,\sigma_k^2)$ and $a_k
    \perp a_i$ for $k \ne i$. It holds that
    $\Ex{\norm{x}_{\sX}^2}=\sumk \sigma_k^2$ and thus this sum is
    assumed to converge. Then the solution to \eqref{eq:wave} and the
    corresponding output are \begin{equation} \nonumber \begin{cases}
        z_1(s,t)= \frac1{\sqrt{2}} \sumk a_k \sin(2^k \pi s) \sin(2^k \pi t), \\
        z_2(s,t)= \frac1{\sqrt{2}} \sumk a_k \sin(2^k \pi s) \cos(2^k \pi t), \\
        dy(t)=\frac1{\sqrt{2}} \sumk a_k c_{2^k} \sin(2^k \pi t) \, dt
        + dw(t).  \end{cases}
    \end{equation}

    Now set $T=1$ and consider the subsequence $\hat
    z_{T,2^l}$ of the discrete time estimates, defined in \eqref{eq:estimates}. As noted in the proof of
    Thm.~\ref{thm:fin_dim}, it holds that $\Ex{\norm{\hat
        z_{T,2^l}-\hat z(T)}_{\sX}^2}=\sum_{i=l}^{\infty}
    \Ex{\norm{\hat z_{T,2^{i+1}}-\hat z_{T,2^i}}_{\sX}^2}$. The
    estimate $\hat z_{T,2^{l+1}}$ is obtained from the previous
    estimate $\hat z_{T,2^l}$ by including measurements $y \left(
      \frac{2i-1}{2^{l+1}}\right)$ for $i=1,\dots,2^l$ as described in the beginning of
     Section~\ref{sec:noiseless}. 
In order to obtain a lower bound for
    $\Ex{\norm{\hat z_{T,2^{l+1}}-\hat z_{T,2^l}}_{\sX}^2}$, define \linebreak
    $\widehat C:=[C_h(h),C_h(3h),\dots,C_h(1-h)]^T:\sX \to
    \mathbb{R}^{2^l}$ where $h=\frac1{2^{l+1}}$. That is, $\widehat C$
    gives the whole batch of the measurements needed for the
    update. For the wave equation it holds that $\norm{z(t)}_{\sX}=\norm{x}_{\sX}$ and so the increments $\Ex{\norm{\hat z_{T,2^{l+1}}-\hat z_{T,2^l}}_{\sX}^2}$ are the same as the corresponding increments for $\tilde x_{2^l} = \Ex{x| \{ y(t), \, t=j/2^l, \, j=1,...,2^l\}}$.
        Then denoting $P_l=\Cov{\tilde x_{2^l}-x,\tilde
      x_{2^l}-x}$, it holds that 
\begin{align*} 
      &\Ex{\norm{\hat z_{T,2^{l+1}}-\hat
          z_{T,2^l}}_{\sX}^2}=\tr\left(P_l\widehat C^* \left( \widehat
          CP_l \widehat C^*+\frac{h}{2}RI \right)^{-1} \! \widehat C
        P_l \right) \\
&\ge
      \ip{\widehat C P_l e_{2^{l+1}},\left( \widehat CP_l \widehat C^*
          \! +\frac{h}{2}RI \right)^{\! -1} \!\! \widehat C P_l
        e_{2^{l+1}} \! }_{\!\! \mathbb{R}^{2^l}}
      \! \ge \frac{\norm{\widehat C P_l
          e_{2^{l+1}}}_{\mathbb{R}^{2^l}}^2}{\max \! \left( \textup{eig} \! \left(\widehat CP_l \widehat
            C^* \! +\frac{h}{2}RI \right) \! \right)}.
\end{align*}

For $h=2^{-l}$ it holds that $C_h(ih)e_{2^k}=0$ when $l<k$ and
$i=1,\dots,2^l-1$ because when computing $C_h(ih)e_{2^k}$ by
\eqref{eq:C_h}, the integrals are always over full periods of the sine
function $\sin(2^k \pi t)$. When $l=k$ it holds that
$C_h(ih)e_{2^k}=\frac{\sqrt{2}h}{\pi}c_{2^k}$ for every
$i=1,3,\dots,2^k-1$. So, loosely speaking, the already included output
values $y\left( \frac{2i-1}{2^l} \right)$ do not carry any information
on $a_k$ for $k>l$. Thus $P_le_{2^{l+1}}=\sigma_{l+1}^2e_{2^{l+1}}$
and $\norm{\widehat C P_l e_{2^{l+1}}}_{\mathbb{R}^{2^l}}^2=2^l
\sigma_{l+1}^2 \left( \frac{\sqrt{2}h}{\pi}c_{2^{l+1}} \right)^2$. For
the denominator it holds by part {\it (i)} of Lemma~\ref{lem:C_hbd}
that
\begin{equation} \nonumber \max \! \left( \! \textup{eig} \!
    \left(\widehat CP_l \widehat C^* \! +\frac{h}{2}RI \right) \!
  \right) \le \frac{h}{2}R+\Ex{\norm{\widehat C
      x}_{\mathbb{R}^{2^l}}^2 \! } \le \frac{h}{2}R+2^l h^2
  \norm{C}_{\mathcal{L}(\sX,\mathbb{R})}^2 \tr(P_0).
\end{equation}
Recalling $h=\frac1{2^{l+1}}$, we finally get
$\Ex{\! \norm{\hat z_{T,2^{l+1}} \! -\hat
      z_{T,2^l} \! }_{\sX}^2 \!} \ge
  \frac{8\sigma_{l+1}^2c_{2^{l+1}}^2}{\pi^2R+4\pi^2
    \norm{C}_{\! \mathcal{L}(\sX,\mathbb{R})}^2 \tr(P_0)}$
and further
\begin{equation} \nonumber 
\Ex{\norm{\hat z_{T,2^l}-\hat
      z(T)}_{\sX}^2} \ge
  \frac{8\sum_{i=l+1}^{\infty} \sigma_i^2c_{2^i}^2}{\pi^2R+4\pi^2
    \norm{C}_{\mathcal{L}(\sX,\mathbb{R})}^2 \tr(P_0)}
\end{equation}
where there is no $h$-dependence and the variances $\{\sigma_k^2 \}$
can be chosen so that the convergence is arbitrarily slow, concluding
the example.
}
\end{example}

Clearly some additional assumptions are needed for getting any
convergence rate estimates. In the following theorem, the initial state
is assumed to be so smooth that the covariance operator satisfies $P_0
\in \mathcal{L}(\sX,\dom(A))$. As noted after Theorem~\ref{thm:fin_dim}, the error components stemming from the initial state and the input noise can be treated separately. Therefore, the following two theorems treat the noiseless case and the input noise is treated in Corollary~\ref{cor:infdim_noise}.
\begin{theorem} \label{thm:conv_bddC} Let $\hat{z}_{T,n}$ and
  $\hat{z}(T)$ be as defined in \eqref{eq:estimates} with $u=0$ in \eqref{eq:system}, and assume $C
  \in \mathcal{L}(\sX,\sY)$. Assume $x \sim N(m,P_0)$ where the covariance
  operator satisfies $P_0 \in \mathcal{L}(\sX,\dom(A))$. Then 
  \begin{equation} \nonumber
    \Ex{\norm{\hat{z}_{T,n}-\hat{z}(T)}_{\sX}^2} \le
    \frac{MT^2}{n}
\end{equation}
where $M=\frac{r \,
  \norm{P_0}_{\mathcal{L}(\sX,\dom(A))}\norm{C}_{\mathcal{L}(\sX,\sY)}^2\Ex{\norm{\hat z_{T,n}-z(T)}_{\sX}^2}}{2\min(\textup{eig}(R))}$. Recall that \newline  $\Ex{\norm{\hat z_{T,n}-z(T)}_{\sX}^2}\le \mu^2\tr(P_0)$.
\end{theorem}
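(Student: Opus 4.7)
The plan is to follow the scheme of Theorem~\ref{thm:fin_dim} essentially verbatim: enumerate a dense sequence $\{t_i\}_{i=1}^\infty \subset [0,T]$ by dyadic refinement (so that $\{t_i\}_{i=1}^n$ is the original grid and level $K\ge1$ contributes $2^{K-1}n$ midpoints at spacing $h=T/(2^K n)$), form the martingale $\tilde z_j := e^{AT}\tilde x_j$ with $\tilde x_j := \Ex{x|\mathcal{F}_j}$, apply the telescope identity of Lemma~\ref{lem:tele}, and invoke the convergence result of Section~\ref{sub:stochastics} to identify the limit with $\hat z(T)$. All reductions through \eqref{eq:inter} go through unchanged in infinite dimensions (they relied only on the output space being $r$-dimensional), so the task reduces to bounding
\[
\Ex{\norm{\tilde z_{j+1}-\tilde z_j}_{\sX}^2} \le \frac{C_R}{h}\,\tr(C_h P_j C_h^*)\,\Ex{\norm{\tilde z_j-z(T)}_{\sX}^2},
\]
where $P_j=\Cov{x-\tilde x_j,x-\tilde x_j}$ and $C_h=C_h(t_{j+1})$.

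The crux is bounding $\tr(C_h P_j C_h^*)$, since the finite-dimensional estimate $\tr(P_j) \le \norm{A}^2\tr(P_0)$ used for Theorem~\ref{thm:fin_dim} is unavailable when $A$ is unbounded. I would combine three ingredients: (a) the Loewner comparison $P_j \le P_0$, which follows because conditioning on more observations only reduces the error covariance; (b) the smoothness assumption $P_0 \in \mathcal{L}(\sX,\dom(A))$; (c) the finite dimension $r$ of the output space. From (a), with $\{f_i\}_{i=1}^r$ the standard basis of $\sY$,
\[
\tr(C_h P_j C_h^*) = \sum_{i=1}^r \ip{P_j C_h^* f_i,C_h^* f_i}_{\sX} \le \sum_{i=1}^r \ip{P_0 C_h^* f_i,C_h^* f_i}_{\sX} = \tr(C_h P_0 C_h^*).
\]
Since $P_0 x \in \dom(A)$ for every $x \in \sX$, Lemma~\ref{lem:C_hbd}{\it(ii)} then yields $\norm{C_h P_0}_{\mathcal{L}(\sX,\sY)} \le \tfrac{h^2}{2}\mu\norm{C}_{\mathcal{L}(\sX,\sY)}\norm{P_0}_{\mathcal{L}(\sX,\dom(A))}$, and combining with Lemma~\ref{lem:C_hbd}{\it(i)} and ingredient (c) (a positive operator on the $r$-dimensional space $\sY$ has trace bounded by $r$ times its operator norm) gives
\[
\tr(C_h P_0 C_h^*) \le r\,\norm{C_h P_0}_{\mathcal{L}(\sX,\sY)}\,\norm{C_h}_{\mathcal{L}(\sX,\sY)} \le \frac{r h^3}{2}\,\mu^2 \norm{C}_{\mathcal{L}(\sX,\sY)}^2 \norm{P_0}_{\mathcal{L}(\sX,\dom(A))}.
\]

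Plugging this back produces an $O(h^2)$ per-increment bound (one power of $h$ worse than the smooth finite-dimensional case), and summing over dyadic levels as in Part~(III) of the proof of Theorem~\ref{thm:fin_dim} yields the geometric series $\sum_{K\ge1} 2^{K-1}n\cdot(T/(2^K n))^2 = T^2/(2n)$, giving the advertised rate $T^2/n$ with the constant $M$ claimed in the statement. The main obstacle is the trace estimate of the previous paragraph: Example~\ref{ex:wave} shows that without some smoothness of $P_0$ no rate is attainable at all, and $P_0 \in \mathcal{L}(\sX,\dom(A))$ is precisely the minimal regularity that allows Lemma~\ref{lem:C_hbd}{\it(ii)} to apply after the Loewner reduction $P_j \le P_0$, with the finite-dimensionality of $\sY$ converting the available operator-norm bounds into a useful trace bound.
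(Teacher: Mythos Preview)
Your proposal is correct and follows essentially the same route as the paper: reduce to the increment bound \eqref{eq:inter}, pass from $P_j$ to $P_0$ via the Loewner order, apply Lemma~\ref{lem:C_hbd}{\it (ii)} to $C_hP_0$ (using $P_0 \in \mathcal{L}(\sX,\dom(A))$) together with Lemma~\ref{lem:C_hbd}{\it (i)} for the remaining $C_h$ factor, and use $\dim\sY=r$ to convert the operator-norm bound into a trace bound. The only cosmetic difference is that the paper first bounds the trace by $r\norm{C_hP_jC_h^*}_{\mathcal{L}(\sY)}$ and then invokes $P_j\le P_0$ on the quadratic form, whereas you apply the Loewner comparison to the trace first; the resulting estimate and the final dyadic summation are identical.
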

\begin{proof}
   The main idea of the proof is the same as in the proof of
  Theorem~\ref{thm:fin_dim} and we note that every step taken until
  equation \eqref{eq:inter} in that proof can be taken in the infinite
  dimensional setting as well --- $p$ just has to be replaced by
  $\infty$ in the sums but this does not cause any problems.

  So we pick up from \eqref{eq:inter} and note first that
  \begin{align*} & \tr \left( C_hP_jC_h^* \right) \le
    r\norm{C_hP_jC_h^*}_{\mathcal{L}(\sY)}=r \! \sup_{\norm{y}_{\sY}=1}\ip{y,C_hP_jC_h^*y}_{\sY} \\
= &r \!
  \sup_{\norm{y}_{\sY}=1}\ip{C_h^*y,P_jC_h^*y}_{\sX} \le r \!
  \sup_{\norm{y}_{\sY}=1}\ip{C_h^*y,P_0C_h^*y}_{\sX}=r\norm{C_hP_0C_h^*}_{\mathcal{L}(\sY)}
\end{align*}
where $r=\dim(\sY)$.  The inequality $P_j \le P_0$ was used in $\sX$,
but now the $\mathcal{L}(\sX,\dom(A))$-norm can be used for
$P_0$. Then using both parts {\it (i)} and {\it (ii)} of
Lemma~\ref{lem:C_hbd} gives
\begin{equation} \nonumber \norm{C_hP_0C_h^*}_{\mathcal{L}(\sY)} \le
  \frac{h^3}2 \mu^2\norm{C}_{\mathcal{L}(\sX,\sY)}^2\norm{P_0}_{\mathcal{L}(\sX,\dom(A))}.
\end{equation}

As before, this leads to an estimate
\begin{equation} \nonumber \Ex{\norm{\hat{z}_{T,n}-\hat{z}(T)}_{\sX}^2}
  \le \frac{r \mu^2 
    \norm{P_0}_{\mathcal{L}(\sX,\dom(A))}\norm{C}_{\mathcal{L}(\sX,\sY)}^2
    \Ex{\norm{\hat z_{T,n}-z(T)}_{\sX}^2}T^2}{2\min(\textrm{eig}(R)) \, n} =: \frac{MT^2}n
\end{equation}
completing the proof.
\end{proof}

\noindent Checking the assumption $P_0 \in \mathcal{L}(\sX,\dom(A))$
might be difficult. Under the stronger smoothness assumption $x \in
\dom(A)$ almost surely, we get the same convergence rate as in the
finite dimensional case:
\begin{theorem} \label{thm:bddC_nat} Make the same assumptions as in
  Theorem~\ref{thm:conv_bddC}. Assume, in addition, that $x \in
  \dom(A)$ almost surely. Then
\begin{equation} \nonumber 
    \Ex{\norm{\hat{z}_{T,n}-\hat{z}(T)}_{\sX}^2} \le
    \frac{MT^3}{n^2}
\end{equation}
where
$M=\frac{\mu^2\tr(AP_0A^*)\norm{C}_{\mathcal{L}(\sX,\sY)}^2\Ex{\norm{\hat z_{T,n}-z(T)}_{\sX}^2}
}{12\min(\textup{eig}(R))}$.
\end{theorem}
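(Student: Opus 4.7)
The proof follows the same three-step blueprint as the proof of Theorem~\ref{thm:fin_dim}: construct the martingale $\tilde z_j=e^{AT}\tilde x_j$ with $\tilde x_j=\Ex{x|\mathcal{F}_j}$ using the halving time-point scheme of Fig.~1, apply the telescope identity of Lemma~\ref{lem:tele}, and use Section~\ref{sub:stochastics} to identify the strong limit with $\hat z(T)$. Steps (I) and (III) transfer verbatim to the infinite dimensional setting, because the sums indexed by $k$ simply range over an orthonormal basis of the separable Hilbert space $\sX$. The only new ingredient is a sharper bound on the increment $\Ex{\norm{\tilde z_{j+1}-\tilde z_j}_{\sX}^2}$. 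The calculation from the Kalman covariance update \eqref{eq:errCov} through \eqref{eq:inter} carries over unchanged and gives
\begin{equation*}
\Ex{\norm{\tilde z_{j+1}-\tilde z_j}_{\sX}^2}\le \frac{C_R}{h}\,\tr(C_h P_j C_h^*)\,\Ex{\norm{\hat z_{T,n}-z(T)}_{\sX}^2}.
\end{equation*}

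All that remains is to bound $\tr(C_h P_j C_h^*)$. Since $\sY$ is finite dimensional, unwinding the trace over an orthonormal basis $\{e_k^{\sY}\}$ of $\sY$ yields the identity $\tr(C_h P_j C_h^*)=\Ex{\norm{C_h(x-\tilde x_j)}_{\sY}^2}$. The extra assumption $x\in\dom(A)$ almost surely, combined with Proposition~\ref{prop:X_1} and Fernique's theorem, makes $x$ a $\dom(A)$-valued Gaussian random variable with $\Ex{\norm{x}_{\dom(A)}^2}<\infty$. Since $A:\dom(A)\to\sX$ is bounded in the graph norm, conditional expectation commutes with $A$, that is, $A\tilde x_j=\Ex{Ax|\mathcal{F}_j}$. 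The $L^2$-contractivity of conditional expectation then gives
\begin{equation*}
\Ex{\norm{A(x-\tilde x_j)}_{\sX}^2}\le \Ex{\norm{A(x-m)}_{\sX}^2}=\tr(AP_0A^*).
\end{equation*}

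Now the sharpened version of Lemma~\ref{lem:C_hbd}(ii) stated in the remark immediately after that lemma, namely $\norm{C_h(t)v}_{\sY}\le \frac{h^2}{2}\mu\norm{C}_{\mathcal{L}(\sX,\sY)}\norm{Av}_{\sX}$ for $v\in\dom(A)$, applied pointwise to $v=x-\tilde x_j$ and squared, yields
\begin{equation*}
\tr(C_h P_j C_h^*)\le \frac{h^4}{4}\,\mu^2\,\norm{C}_{\mathcal{L}(\sX,\sY)}^2\,\tr(AP_0A^*).
\end{equation*}
Plugging this into the increment bound with $C_R=2/\min(\textup{eig}(R))$ gives
\begin{equation*}
\Ex{\norm{\tilde z_{j+1}-\tilde z_j}_{\sX}^2}\le \frac{h^3}{2\min(\textup{eig}(R))}\,\mu^2\,\norm{C}_{\mathcal{L}(\sX,\sY)}^2\,\tr(AP_0A^*)\,\Ex{\norm{\hat z_{T,n}-z(T)}_{\sX}^2}.
\end{equation*}
Summing over the halving scheme exactly as in step (III) of the proof of Theorem~\ref{thm:fin_dim}, with $2^{K-1}n$ increments of size $h=T/(2^K n)$ for each $K\ge 1$, produces the geometric series $\sum_{K\ge 1}2^{K-1}n\bigl(T/(2^K n)\bigr)^3=T^3/(6n^2)$, which delivers the claimed bound with the stated $M$. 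The one genuinely delicate point, and hence the main obstacle, is the commutation $A\tilde x_j=\Ex{Ax|\mathcal{F}_j}$; this is routine once $x$ is recognised as an $L^2(\Omega;\dom(A))$-valued Gaussian variable via Proposition~\ref{prop:X_1}, but it is the hinge that reduces the infinite dimensional problem to the same calculation performed in the finite dimensional case.
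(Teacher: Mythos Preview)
Your proof is correct and follows the same overall strategy as the paper: the martingale construction, the telescope identity, and picking up from \eqref{eq:inter} are identical. The only difference lies in how $\tr(C_h P_j C_h^*)$ is bounded. You apply the pointwise estimate $\norm{C_h v}_{\sY}\le \tfrac{h^2}{2}\mu\norm{C}_{\mathcal{L}(\sX,\sY)}\norm{Av}_{\sX}$ directly to $v=x-\tilde x_j$, which forces you to first argue that $\tilde x_j\in\dom(A)$ and that $A$ commutes with conditional expectation, and then invoke $L^2$-contractivity of conditional expectation on $Ax$. The paper instead uses the operator inequality $P_j\le P_0$ in $\sX$ to obtain $\tr(C_hP_jC_h^*)\le\tr(C_hP_0C_h^*)=\Ex{\norm{C_h(x-m)}_{\sY}^2}$, and only then applies the pointwise bound to $x-m$, which lies in $\dom(A)$ directly by hypothesis. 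This sidesteps entirely the commutation step you flag as the ``main obstacle''. Both routes land on the same $h^4$-bound for $\tr(C_hP_jC_h^*)$ and hence the same final constant, but the paper's ordering is shorter and avoids the need to verify that conditional expectation preserves $\dom(A)$.
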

\begin{proof}
  The proof is the same as that of Theorem~\ref{thm:fin_dim} but from
  Eq.  \eqref{eq:inter} we proceed differently. It holds that
  \begin{align*} 
\tr\left(C_hP_jC_h^* \right) &\le
    \tr\left(C_hP_0C_h^* \right) = \Ex{\norm{C_hx}_{\sY}^2}
    \le \frac{h^4}{4}\mu^2\norm{C}_{\mathcal{L}(\sX,\sY)}^2 \Ex{\norm{Ax}_{\sX}^2}
\end{align*}
where the last inequality holds by part {\it (ii)} of
Lemma~\ref{lem:C_hbd}. The term is finite by
Proposition~\ref{prop:X_1} and Fernique's theorem. Further, it holds
that $\Ex{\norm{Ax}_{\sX}^2}=\tr(AP_0A^*)$.  Now the
result follows as above.
\end{proof}
As discussed after Theorem \ref{thm:noise}, the error components stemming from the initial state error and the input noise can be treated separately. Therefore, as an almost direct corollary of Theorems \ref{thm:noise},\ref{thm:conv_bddC}, and \ref{thm:bddC_nat}, we obtain the following result:
\begin{corollary} \label{cor:infdim_noise}
Let $\hat{z}_{T,n}$ and  $\hat{z}(T)$ be as defined in \eqref{eq:estimates} and assume $C \in \mathcal{L}(\sX,\sY)$ and $B \in \mathcal{L}(\sU,\dom(A))$. Assume also either (i): $P_0 \in \mathcal{L}(\sX,\dom(A))$, or (ii): $x \in \dom(A)$ almost surely.
Then
\[
\Ex{\norm{\hat z_{T,n}-\hat z(T)}_{\sX}^2} \le \frac{M_1 T^2}{n}+\frac{M_3 T^4}{n^2} + \textup{err}_x
\]
where $M_1$ and $M_3$ are as in Theorem~\ref{thm:noise} end $\textup{err}_x$ is as in Theorem~\ref{thm:conv_bddC} in the case of assumption (i), or as in Theorem~\ref{thm:bddC_nat} in the case of assumption (ii).
\end{corollary}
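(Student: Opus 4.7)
The plan is to replicate the proof of Theorem~\ref{thm:noise} in the infinite-dimensional setting and substitute the bound on $\Ex{\norm{C_h x}_{\sY}^2}$ with the one from Theorem~\ref{thm:conv_bddC} (under assumption (i)) or Theorem~\ref{thm:bddC_nat} (under assumption (ii)). The first step is to carry the derivation of Theorem~\ref{thm:noise} through~\eqref{eq:bd_inc} verbatim---the only change being that the basis sum over $\{e_k\}$ runs to infinity---and then, using independence of $x$ and $u$, to split
\[
\Ex{\norm{\Ch [x,u]}_{\sY}^2} \le \Ex{\norm{C_h x}_{\sY}^2} + \Ex{\norm{C_{h,u} u}_{\sY}^2}.
\]

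Next I would handle the two pieces separately. For the input-noise term I would verify that the computation starting from~\eqref{eq:Chu_mod} transfers unchanged: every operator-valued covariance appearing in the treatment of $(I)$, $(II)$ and the $B(u(t)-u(t_{j+1}))$-contributions reduces to a trace of $BQB^*$ or $ABQB^*A^*$, and these are finite-rank bounded operators on $\sX$ because $\sU=\mathbb{R}^q$ is finite dimensional and $AB \in \mathcal{L}(\sU,\sX)$ by the standing assumption $B \in \mathcal{L}(\sU,\dom(A))$. Hence the bound $\Ex{\norm{C_{h,u}u}_{\sY}^2} \le \tfrac{3Th^4}{4}\mu^2\norm{C}^2\tr(ABQB^*A^*) + \tfrac{h^3}{2}\norm{C}^2\tr(BQB^*)$ is reproduced, and telescoping the corresponding per-step increments over the dyadic halving scheme yields the $M_1 T^2/n + M_3 T^4/n^2$ contribution exactly as in Theorem~\ref{thm:noise}.

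For the initial-state term I would apply, under assumption (i), the bound from the proof of Theorem~\ref{thm:conv_bddC}, namely $\Ex{\norm{C_h x}_{\sY}^2} \le \tfrac{h^3}{2}r\mu^2\norm{C}_{\mathcal{L}(\sX,\sY)}^2\norm{P_0}_{\mathcal{L}(\sX,\dom(A))}$, obtained via the trace-vs-norm trick $\tr(C_hP_0C_h^*) \le r\norm{C_hP_0C_h^*}_{\mathcal{L}(\sY)}$ combined with both parts of Lemma~\ref{lem:C_hbd}. Under assumption (ii), Proposition~\ref{prop:X_1} and Fernique's theorem give $\Ex{\norm{Ax}_{\sX}^2}=\tr(AP_0A^*)<\infty$, and the pointwise estimate $\norm{C_h(t)x}_{\sY} \le \tfrac{h^2}{2}\mu\norm{C}_{\mathcal{L}(\sX,\sY)}\norm{Ax}_{\sX}$ noted after Lemma~\ref{lem:C_hbd} yields $\Ex{\norm{C_h x}_{\sY}^2} \le \tfrac{h^4}{4}\mu^2\norm{C}_{\mathcal{L}(\sX,\sY)}^2\tr(AP_0A^*)$. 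Telescoping these over the dyadic scheme exactly as in Theorems~\ref{thm:conv_bddC} and~\ref{thm:bddC_nat} produces $\textup{err}_x$ in either case.

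The main (minor) obstacle is checking that no step of the Theorem~\ref{thm:noise} calculation implicitly exploits finite dimensionality of $\sX$. This reduces to verifying that the stochastic integrals involving $e^{A(\cdot)}B$ are well-defined $\sX$-valued Wiener integrals---which they are since $B \in \mathcal{L}(\sU,\dom(A)) \subset \mathcal{L}(\sU,\sX)$---and that every covariance operator that appears is trace class, as noted above. Assembling the three bounds via the telescope identity~\eqref{eq:telescope} then yields the stated inequality.
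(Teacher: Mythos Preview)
Your proposal is correct and takes essentially the same approach as the paper. The paper's own proof is a one-line remark that the argument of Theorem~\ref{thm:noise} goes through with the modifications of Theorems~\ref{thm:conv_bddC} or~\ref{thm:bddC_nat}; your proposal simply spells out what that entails, correctly identifying that the only infinite-dimensional issues are the well-definedness of the $\sX$-valued Wiener integrals (handled by $B \in \mathcal{L}(\sU,\dom(A)) \subset \mathcal{L}(\sU,\sX)$) and the finiteness of $\tr(BQB^*)$ and $\tr(ABQB^*A^*)$ (which are finite-rank since $\sU=\mathbb{R}^q$).
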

\noindent The proof is the same as the proof of Theorem~\ref{thm:noise}, with the modifications of Theorems~\ref{thm:conv_bddC} or~\ref{thm:bddC_nat}. Note that $\tr(ABQB^*A^*) \le \norm{A}_{\mathcal{L}(\dom(A),\sX)}^2\norm{B}_{\mathcal{L}(\sU,\dom(A))}^2 \tr(Q)$ and $\tr(BQB^*) \le \norm{B}_{\mathcal{L}(\sU,\sX)}^2 \tr(Q)$.

\section{Discussion}

Since the implementation of the discrete time Kalman filter is
straightforward, it is a tempting choice for state estimation for
discretized continuous time systems.  As the temporal discretization
is refined, the discrete time state estimate converges pointwise to
the continuous time estimate in $L^2(\Omega;\sX)$.  In this article,
we derived convergence speed estimates at which the discrete time Kalman filter estimate converges to the continuous time estimate as the temporal discretization is refined. The result was achieved for both finite and infinite dimensional systems with bounded observation operator and smooth input operator. In the case of infinite dimensional systems, some smoothness assumption on the initial state
is needed for obtaining any convergence speed estimates. This was
demonstrated in Example~\ref{ex:wave}. Possible additional assumptions
are (i): for the initial state covariance it holds that $P_0 \in
\mathcal{L}(\sX,\dom(A))$; or (ii): for the initial state it holds that
$x \in \dom(A)$ almost surely. In the latter case we obtained the same
convergence speed estimate as for finite dimensional systems.

A topic that would require further work are systems with infinite dimensional output
space. The
output space dimension $r$ does not appear explicitly in the
convergence speed estimates, except for
Thm.~\ref{thm:conv_bddC}. However, in the proofs we need an upper
bound for $\norm{\left( C_hP_jC_h^*+\frac{h}{2}R
  \right)^{-1}}_{\mathcal{L}(\sY)}$ and thus, in order to obtain
\eqref{eq:C_R}, we made a coercivity assumption $R \ge \epsilon I>0$
which excludes infinite dimensional output space since $R$ is required
to be a trace class operator. In the beginning, we also assumed that the input space $\sU$ is finite dimensional. This is merely an assumption by which tedious definitions of infinite dimensional Wiener processes are avoided. For more on this subject, we refer to \cite{DaPrato}.

Two more topics that are not covered by this article are the long time
behaviour as $T \to \infty$, and using some approximate time
integration scheme for taking the time step. When $T$ grows, the error
covariance converges under some assumptions on the observability of
the system. Of course,
the observability of the continuous time system does not imply the
observability of the discretized system. In the case where there is
input noise affecting the system, the error covariance limits are
obtained as the solutions $P_d$ and $P_c$ of the corresponding
discrete or continuous time algebraic Riccati equations,
respectively. Then it holds that $\lim_{n \to \infty}\Ex{\norm{\hat
    x_{n\Delta t,n}-\hat x(n \Delta t)}_{\sX}^2}=\tr(P_d-P_c)$ where
$\hat x_{n\Delta t,n}$ and $\hat x(n \Delta t)$ are defined in
\eqref{eq:estimates}.
Finally, further research would be needed
to study the error caused to the state estimate if some numerical time
integration scheme is used for computing the discrete time update,
that is, $e^{A \Delta t}$ is not computed accurately. A similar
problem is addressed in \cite{Axelsson} and \cite{Axelsson2}, but they
are mainly concerned with the stability of the resulting filter.

 \subsection*{Acknowledgements}

 The author was financially supported by the Finnish Graduate School in
 Engineering Mechanics. The author thanks Dr. Jarmo Malinen for
 valuable comments on the manuscript.

 \bibliographystyle{plain}

\begin{thebibliography}{24}

\bibitem{Aalto_state} A. Aalto, Spatial discretization
  error in {K}alman filtering for discrete-time infinite dimensional
  systems, ArXiv:1406:7160 (2014), 19 pages.


\bibitem{Axelsson} P. Axelsson and
  F. Gustafsson, Discrete-time solutions to the continuous-time
  differential {L}yapunov equation with applications to {K}alman
  filtering, IEEE T. Automat. Contr., 60 (2015), 632--643.

\bibitem{Bensoussan} A. Bensoussan, ``Filtrage Optimal des Syst\`{e}mes Lin\'eaires,'' Dunod, Paris, 1971.
   

\bibitem{CP}
R. Curtain and A. Pritchard, ``Infinite Dimensional Linear
  Systems Theory,'' Springer-Verlag, 1979.


\bibitem{DaPrato} G. {Da~Prato} and
  J. Zabczyk, ``Stochastic Equations in Infinite Dimensions,''
  Encyclopedia of Mathematics and its Applications 44,
  Cambridge University Press, 1992.


\bibitem{Davis} M. H. A. Davis, ``Linear Estimation and Stochastic Control,'' Wiley, New York, 1977.

\bibitem{Gelb}
A. Gelb, ``Applied Optimal Estimation,'' MIT Press, Cambridge, MA, 1974.

\bibitem{Galerkin_filter} A. Germani, L. Jetto, and M. Piccioni,
  Galerkin approximation for optimal linear filtering of
  infinite-dimensional linear systems, SIAM J. Control Optim., 26
  (1988), 1287--1305.

\bibitem{Horowitz_phd}
L. L. Horowitz, ``Optimal Filtering of Gyroscopic Noise,'' PhD.
  thesis, Massachusetts Institute of Technology, 1974.

\bibitem{Kalman} R. Kalman, A new approach to linear
  filtering and prediction problems, Journal of Basic Engineering, 82
  (1960), 35--45.

\bibitem{Kalman_Bucy}
R. Kalman and R. Bucy, New results in linear
  filtering and prediction theory, Journal of Basic Engineering, 83 (1961),
  95--107.



\bibitem{Wave_error}
W. Lee, D. McDougall, and A. Stuart, Kalman
  filtering and smoothing for linear wave equations with model error, Inverse
  Problems, 27 (2011).



\bibitem{Oks}
B. O. \O ksendal, ``Stochastic Differential Equations: An
  Introduction with Applications,'' Springer-Verlag, 1998.

\bibitem{Pisier}
G. Pisier, ``Martingales in Banach Spaces (in Connection with Type and Cotype),'' Manuscript, Course IHP, Feb. 2--8, 2011. Available at: {\verb http://www.math.jussieu.fr/~pisier/ihp-pisier.pdf }


\bibitem{Salgado}
M. Salgado, R. Middleton, and G. Goodwin,
  Connection between continuous and discrete {R}iccati equations with
  applications to {K}alman filtering, IEE Proceedings, 135 (1988), 28--34.

\bibitem{simon_book}
D. Simon, ``Optimal State Estimation --- Kalman, $H_{\infty}$,
  and Nonlinear Approaches,'' John Wiley \& Sons, 2006.


\bibitem{Sun}
J. Sun, Sensitivity analysis of the discrete-time
  algebraic {R}iccati equation, Linear Algebra Appl.,
  275--276 (1998), 595--615.


\bibitem{Axelsson2}
N. Wahlstr\"om, P. Axelsson, and F. Gustafsson,
  Discretizing stochastic dynamical systems using {L}yapunov
  equations, ArXiv:1402.1358 (2014), 17 pages.


\end{thebibliography}

\end{document}